\newtheorem{definition}{Definition}[section]
\newtheorem{theorem}[definition]{Theorem}
\newtheorem{lemma}[definition]{Lemma}
\newtheorem{corollary}[definition]{Corollary}
\newtheorem{proposition}[definition]{Proposition}
\newcommand{\Remark}[1][ ]{{\bf Remark. #1 }}
\def\N{\mathbb{N}}
\def\F{\mathbb{F}}
\def\P{\mathsf{P}}
\def\Div{\mathsf{Div}}
\def\A{\mathcal{A}}
\newcommand{\Fq}{\mathbb{F}_q}
\def\FF{\mathbf{F}}
\def\GG{\mathbf{G}}
\def\HH{\mathbf{H}}
\def\O{\mathcal{O}}
\def\i{\mathrm{i}}
\def\XX{\mathcal{C}}
\def\B{\mathrm{B}}
\newcommand{\Cl}{\mathsf{Cl}}
\newcommand{\Jac}{\ensuremath{\mathsf{Jac}}}
\newcommand{\Ld}[2][D]{\ensuremath{\mathscr{L}(#2\mathcal{#1})}} 
\newcommand{\D}[1][D]{\ensuremath{\mathcal{#1}}} 
\newcommand{\mus}{\mu^\mathrm{sym}}
\newcommand{\Ms}{\mathrm{M}^\mathrm{sym}}
\begin{document}

\title[Tower with maximal Hasse-Witt invariant and tensor rank over $\F_{2^n}$ and $\F_{3^n}$]{Tower of algebraic function fields with maximal Hasse-Witt invariant \\and tensor rank of multiplication in any extension of $\F_2$ and $\F_3$}
\author{St\'ephane Ballet}
\address{Aix Marseille Universit\'e, {CNRS}, Centrale Marseille, {I2M}, {UMR 7373}, 13453 Marseille, 
France\qquad\linebreak[4]\indent Case 907, 163 Avenue de Luminy, 13288 Marseille Cedex 9, France}
\email{stephane.ballet@univ-amu.fr}
\author{Julia Pieltant}
\address{Inria -- Saclay \^Ile-de-France,
LIX, \'Ecole Polytechnique,
91128 Palaiseau Cedex, France}
\email{pieltant@lix.polytechnique.fr}
\urladdr{http://www.lix.polytechnique.fr/~pieltant}
\date{\today}
\keywords{Algebraic function field, tower of function fields, tensor rank, algorithm, finite field}
\subjclass[2000]{Primary 14H05; Secondaries 11Y16, 12E20}

\begin{abstract}

Up until now, it was recognized that a large number of 2-torsion points was a technical barrier to improve the bounds for the symmetric tensor rank of multiplication in every extension of any finite field. In this paper, we show that there are two exceptional cases, namely the extensions of $\F_2$ and $\F_3$. In particular, using the definition field descent on the field with 2 or 3 elements of a Garcia-Stichtenoth tower of algebraic function fields which is asymptotically optimal in the sense of Drinfel'd-Vl\u{a}du\c{t} and has maximal Hasse-Witt invariant, we obtain a significant improvement of the uniform bounds for the symmetric tensor rank of multiplication in any extension of $\F_2$ and $\F_3$.

\end{abstract}

\maketitle


\section{Introduction}

\subsection{General context} The determination problem of the tensor rank of multiplication in finite fields has been widely studied over the past  20 years. This problem is worthwhile both because of its theoretical interest and because it has several applications in the area of information theory such as cryptography and coding theory. In particular, Shparlinski, Tsfasman and Vl\u{a}du\c{t} have developed a correspondence between bilinear multiplication algorithms and linear codes with good parameters \cite{shtsvl}. Their work is an achievement of the brilliant idea introduced by D.V. and G.V. Chudnovsky in \cite{chch}. 

The theory of bilinear complexity of multiplication is a part of algebraic complexity theory. For a more extensive presentation of the background and the framework of this topic, we refer the reader to the classic book \cite{buclsh} by B\"urgisser, Clausen and Shokrollahi.

\subsection{Tensor rank and multiplication algorithm}

Let us recall the notions of multiplication algorithm and associated bilinear complexity as in \cite{rand3}:

\begin{definition}
Let $K$ be a field and $E_0, \ldots , E_s$ be finite dimensional $K$-vector spaces. A non zero element \linebreak[4]${t\in E_0 \otimes \cdots \otimes E_s}$ is said to be an elementary tensor, or a tensor of rank 1, if it can be written in the form \linebreak[4]${t=e_0 \otimes \cdots \otimes e_s}$ for some ${e_i \in E_i}$. More generally, the rank of an arbitrary ${t\in E_0 \otimes \cdots \otimes E_s}$  is defined as the minimal length of a decomposition of $t$ as a sum of elementary tensors.
\end{definition}

\begin{definition}
If 
$$
\alpha\; : \; E_1\times \cdots \times E_s \longrightarrow E_0
$$
is an $s$-linear map, the $s$-linear complexity of $\alpha$ is defined as the tensor rank of the element
$$
\tilde{\alpha}\in E_0\otimes E_1^{\vee} \otimes \cdots \otimes E_s^{\vee}
$$
naturally deduced from $\alpha$; where $E_i^{\vee}$ denotes the dual of $E_i$ as vector space over~$K$ for any integer $i$. In particular, the $2$-linear complexity is called the bilinear complexity.
\end{definition}

\begin{definition}
Let $\A $ be a finite-dimensional $K$-algebra. We denote by 
$$\mu(\A/K)$$
the bilinear complexity of the multiplication map
$$
\mathsf{m}_{\A} \; : \; \A \times \A \longrightarrow \A
$$
considered as a $K$-bilinear map. 

\noindent
In particular, if $\A=\F_{q^n}$ and $K=\Fq$,  we set:
$$
\mu_q(n):=\mu(\F_{q^n}/\Fq).
$$
\end{definition}

More concretely, $\mu(\A/K)$ is the smallest integer $\ell$ such that there exist linear forms \linebreak[4]${\phi_1,\ldots , \phi_\ell, \psi_1, \ldots  , \psi_\ell  :  \A \longrightarrow K}$, and elements ${w_1, \ldots , w_\ell \in \A}$, such that for all ${x,y\in \A}$ one has
\begin{equation} \label{xy}
xy= \mathsf{m}_\A(x,y) = \phi_1(x)\psi_1(y)w_1+\cdots +\phi_\ell(x)\psi_\ell(y)w_\ell,
\end{equation}
since such an expression is the same thing as a decomposition
\begin{equation}
t_{\mathsf{m}_\A}=\sum_{i=1}^\ell w_i\otimes \phi_i\otimes \psi_i \in  \A \otimes \A^\vee \otimes \A^\vee
\end{equation}
for the multiplication tensor of $\A$.

\begin{definition}
We call multiplication algorithm of length $\ell$  for $\A/K$ a collection of $\phi_i, \psi_i, w_i$ that satisfy \eqref{xy} or equivalently a decomposition 
$$
t_{\mathsf{m}_\A}=\sum_{i=1}^\ell w_i\otimes \phi_i\otimes \psi_i \in  \A \otimes \A^\vee \otimes \A^\vee
$$ 
for the multiplication tensor of $\A$. Such an algorithm is said symmetric if ${\phi_i=\psi_i}$ for all $i$ (this can happen only if $\A$ is commutative).
\end{definition}

Hence, when $\A$ is commutative, it is interesting to study the minimal length of a symmetric multiplication algorithm since it turns out that it plays an important role in several other areas such as Riemann-Roch system of equations, arithmetic secret sharing, multiplication-friendly codes, etc, as mentioned in \cite{cacrxing3}.

\begin{definition}
If $\A$ is a finite-dimensional $K$-algebra, the symmetric bilinear complexity
$$
\mus(\A/K)
$$ 
is the minimal length of a symmetric multiplication algorithm. 

\noindent
In particular, if $\A=\F_{q^n}$ and $K=\F_q$,  we set:
$$
\mus_q(n):=\mus(\F_{q^n}/\F_q).
$$
\end{definition}

\subsection{Basic notions related to function fields and notation} 

Let $\FF/\Fq$ be an algebraic function field of one variable of genus~$g$, with constant field $\Fq$, associated to a curve $\XX$ defined over $\F_q$. In the sequel, we may  simultaneously use the dual language of (smooth, absolutely irreducible, projective) curves by associating to $\FF/\Fq$ a unique ($\Fq$-isomorphism class of) curve $\XX/\Fq$ of genus~$g$ and conversely to such a curve its function field. 

For any integer ${k\geq 1}$, we denote by ${\P_k(\FF/\Fq)}$ the set of places of degree $k$, by ${\mathrm{B}_k(\FF/\Fq)}$ the cardinality of this set and by ${\P(\FF/\Fq)=\cup_k \,\P_k(\FF/\Fq)}$ the set of all places in $\FF/\Fq$.

For any place $P$, we define $\mathrm{F}_P$ to be the residue class field of $P$ and $\O_P$ its valuation ring. Every element ${t \in P}$ such that ${P = t \O_P}$ is called a local parameter for $P$ and we denote by ${v_P}$ a discrete valuation associated to the place $P$ in $\FF/\Fq$. Recall that this valuation does not depend on the choice of the local parameter. 

The divisor group of ${\FF/\Fq}$  is denoted by ${\Div(\FF/\Fq)}$.
The degree of a divisor ${\D=\sum_P a_P P}$ is defined by \linebreak[4]${\deg \D=\sum_P a_P \deg P}$ where $\deg P$ is the dimension of $\mathrm{F}_P$ over $\Fq$; the support of $\D$ is the set ${\mathrm{supp} \, \D}$ of the places $P$ such that ${a_P  \neq 0}$;  the order of the divisor $\D$ in $P$ is the integer $a_P$ denoted by ${\mathrm{ord}_P \, \D}$.

We denote by $\Div_n(\FF/\Fq)$ the set of divisors of degree $n$ and we say that the divisor $\D$ is effective if for each ${P\in \mathrm{supp}\, \D}$, we have $a_P \geq 0$. We denote by ${\Div_n^+(\FF/\Fq)}$ the set of effective divisors of degree $n$ and we set ${\mathrm{A}_n := \# \Div_n^+(\FF/\Fq)}$.
  Let $f \in \FF/\Fq$ be non-zero, we denote by $(f)$ the divisor
associated to the function $f$, namely
$$
(f)=\sum_{P\in\P(\FF/\Fq)} v_P(f)P.
$$
Such a divisor $(f)$ is called a principal divisor, and the set of  principal divisors is denoted by ${\mathsf{Princ}}(\mathbf{F}/\mathbb{F}_q)$; it is a subgroup of ${\Div_0(\FF/\Fq)}$.
 Two divisors $\D_1$ and $\D_2$ are said to be equivalent, denoted by ${\D_1 \thicksim \D_2}$, if ${\D_1=\D_2+(f)}$ for an element ${f \in \FF\backslash \{0\}}$.
 The factor group 
$$
\Cl(\FF/\Fq)= \Div(\FF/\Fq)/\mathsf{Princ}(\FF/\Fq)
$$
is called the divisor class group.
We will denote by $[\D]$ the class of the divisor $\D$ in $\Cl(\FF/\Fq)$.

For any divisor $\D$, the Riemann-Roch space associated to $\D$ is the set
$$
\Ld{}=\{f \in \FF/\Fq \; | \; \D + (f) \geq 0\}\cup \{0\}.
$$
It is a vector space over $\Fq$ whose dimension is denoted $\dim \D$.
If $\D_1 \thicksim \D_2$, the following holds:
$$
\deg\D_1 = \deg \D_2, \qquad \qquad \dim\D_1=\dim\D_2,
$$
so that we can define the degree $\deg\,[\D]$ and the dimension $\dim\,[\D]$ of a class.\\
Since the degree of a principal divisor is zero, we can define the subgroup
${\Cl_0(\FF}/\Fq)$ of classes of degree zero divisors in ${\Cl(\FF/\Fq)}$.
It is a finite group and we denote by $h(\FF/\Fq)$ its order, called the class number of $\FF/\Fq$.\\
Moreover if 
$$
L_\FF(t)=\sum_{i=0}^{2g} a_i t^i=\prod_{i=1}^{g} (1-\pi_i t)(1-\overline{\pi}_i t)
$$
 is the numerator of the Zeta function of $\FF/\Fq$, where ${|\pi_i|=\sqrt{q}}$, then we have ${h(\FF/\Fq)=L_\FF(1)}$.

By F. K. Schmidt's Theorem (cf. \cite[Corollary V.1.11]{stic}), there always exists a rational divisor of degree one, so the group ${\Cl_0(\FF/\Fq)}$ is isomorphic to the group of 
$\Fq$-rational points on the Jacobian of $\XX$, denoted by $\Jac(\XX)$. 
In particular, $h(\mathbf{F}/\mathbb{F}_q)=\# \Jac(\XX)(\Fq)$. \\

The Riemann-Roch Theorem states that the dimension of the vector space $\Ld{}$ is related to the degree of the divisor $\D$ and to the genus of $\FF/\Fq$:
\begin{equation} \label{RR}
\dim \D = \deg \D-g+1 +\dim (\kappa-\D),
\end{equation}
where $\kappa$ denotes a canonical divisor of $\FF/\Fq$ (or equivalently  a divisor of degree ${2g-2}$ and dimension $g$).
In this relation, the complementary term ${\i(\D) :=\dim (\kappa-\D)}$ is called the index of speciality of $\D$. Note that in any case, we have ${\i(\D)\geq0}$.
In particular, a divisor $\D$ is called a non-special divisor when the index of speciality $\i(\D)$
is zero and is called a special divisor if ${\i(\D)>0}$. 
Many deep results have been obtained on the study of non-special divisors in the 
dual language of curves when the field of definition is algebraically closed. 
See for instance \cite{arcoal} 
for a beautiful survey over $\mathbb{C}$. On the contrary, few results are known when 
the rationality of the divisor is taken into account as in our context where 
we require the divisor $\D$ to be defined over $\mathbb{F}_q$.
We refer to~\cite{balb} for known results on the existence of non-special divisors of degree $g$ and ${g-1}$.

\subsection{Known results}

\subsubsection{General results}\label{sect:intro}

The bilinear complexity $\mu_q(n)$ of the multiplication in the $n$-degree extension of a finite field $\F_q$ is known for certain values of $n$.  In particular, S. Winograd \cite{wino3} and H. de Groote \cite{groo} have shown that this complexity is ${\geq 2n-1}$, with equality holding if and only if ${n \leq \frac{1}{2}q+1}$. 
Using the principle of the D.V. and G.V. Chudnovsky algorithm \cite{chch} applied to elliptic curves, M.A. Shokrollahi has shown in \cite{shok} that the bilinear complexity of multiplication is equal to $2n$ for ${\frac{1}{2}q +1< n < \frac{1}{2}(q+1+{\epsilon (q) })}$ where $\epsilon$ is the function defined by:
$$
\epsilon (q) = \left \{
	\begin{array}{l}
 		 \mbox{greatest integer} \leq 2{\sqrt q} \mbox{ prime to $q$, if $q$ is not a perfect square} \\
  		2{\sqrt q}\mbox{, if $q$ is a perfect square.}
	\end{array} \right .
$$

Moreover, U. Baum and M.A. Shokrollahi have succeeded in \cite{bash} to construct effective optimal algorithms of Chudnovsky-Chudnovsky  type  in the elliptic case. 

Recently in \cite{ball1}, \cite{ball3}, \cite{baro1}, \cite{balbro}, \cite{balb}, \cite{bach} and \cite{ball5} the study made by M.A. Shokrollahi has been  generalized to algebraic function fields of arbitrary genus. \\

Let us recall that the original algorithm of D.V. and G.V. Chudnovsky introduced in \cite{chch} leads to the following theorem:

\begin{theorem}
Let $q$ be a prime power. The tensor rank $\mu_q(n)$ of multiplication in any finite extension~$\F_{q^n}$ of $\Fq$ is linear with respect to the extension degree; more precisely, there exists a constant $C_q$ such that for any $n$, it holds that:
$$
\mus_q(n) \leq C_q n.
$$
\end{theorem}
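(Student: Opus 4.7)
The plan is to apply the interpolation-evaluation scheme of D.V. and G.V. Chudnovsky. Given $n$, construct a function field $\FF/\Fq$ of some genus $g$ equipped with (a) a place $Q$ of degree exactly $n$, so that the residue field $\mathrm{F}_Q$ is isomorphic to $\F_{q^n}$; (b) a divisor $\D$ with support disjoint from $Q$; and (c) a set of $N$ rational places $P_1, \ldots, P_N$ disjoint from $Q$ and from $\mathrm{supp}\,\D$. Tune $\deg \D$ so that, via Riemann-Roch (Equation~\eqref{RR}), the evaluation map $\mathrm{Ev}_Q : \Ld{} \to \mathrm{F}_Q$ is surjective and the evaluation map $\mathrm{Ev}_{\underline{P}} : \Ld{2} \to \Fq^N$, $f \mapsto (f(P_1), \ldots, f(P_N))$, is injective. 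Surjectivity follows when $\deg \D \geq n + g - 1$ (by applying Riemann-Roch to $\D - Q$), and injectivity follows when $\deg 2\D < N$; the two conditions are jointly satisfiable as soon as the field has enough rational places relative to its genus.

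Given such data, for $x, y \in \F_{q^n}$ pick $\Fq$-linear lifts $\hat{x}, \hat{y} \in \Ld{}$ under $\mathrm{Ev}_Q$. Then $\hat{x}\hat{y} \in \Ld{2}$ and $\mathrm{Ev}_Q(\hat{x}\hat{y}) = xy$. By injectivity of $\mathrm{Ev}_{\underline{P}}$ on $\Ld{2}$, the product $\hat{x}\hat{y}$ is determined by the $N$ scalars $\hat{x}(P_i)\hat{y}(P_i) \in \Fq$, each computed as a single symmetric product in $\Fq$. Composing the $\Fq$-linear lifts, the $N$ scalar products, the $\Fq$-linear inverse of $\mathrm{Ev}_{\underline{P}}$ restricted to its image, and the evaluation $\mathrm{Ev}_Q$, one obtains a symmetric multiplication algorithm of length $N$ for $\F_{q^n}/\Fq$; hence $\mus_q(n) \leq N$.

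For the linear bound $\mus_q(n) \leq C_q n$, one needs a sequence of function fields for which $N$ grows at most linearly with $n$. Invoke an asymptotically good family over $\Fq$ producing, for arbitrarily large $g$, a function field with $\B_1(\FF/\Fq) \geq c_q \cdot g$ for some positive constant $c_q$ depending only on $q$. For each $n$, choose $g$ of order $n$ such that a place of degree $n$ exists (its existence being ensured either by Zeta-function counting arguments, or by scalar extension to $\F_{q^n}$ and descent). Setting $\deg \D \approx n + g$ makes both Riemann-Roch conditions simultaneously feasible; since $N$ may then be taken of order $g$, which is of order $n$, this yields $N \leq C_q n$ with $C_q$ depending only on $q$.

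The main obstacle, and the technical heart of the argument, is the simultaneous control of three constraints: the existence of a function field of genus $O(n)$ possessing both a place of degree exactly $n$ and $\Omega(n)$ rational places of degree one. For small $q$, typically $q = 2$ or $q = 3$, the constant $c_q$ is small and the resulting $C_q$ is weak; this is precisely the deficiency that the present paper targets by descending a Garcia-Stichtenoth tower with maximal Hasse-Witt invariant. For the merely qualitative statement $\mus_q(n) = O(n)$, however, any asymptotically good family over $\Fq$ suffices to produce some finite constant $C_q$, and the algorithmic description above converts such a family into the desired symmetric multiplication algorithms.
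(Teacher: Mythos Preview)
The paper does not actually prove this theorem: it is stated as a recall of the original result of D.V.\ and G.V.\ Chudnovsky (reference~\cite{chch}), and the subsequent proposition merely lists the best known explicit values of $C_q$ with references. So there is no ``paper's own proof'' to compare against; your sketch is, in spirit, precisely the Chudnovsky--Chudnovsky interpolation method that the paper invokes.

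Two technical points deserve tightening. First, your claim that surjectivity of $\mathsf{Ev}_Q$ follows from $\deg\D\geq n+g-1$ is not quite right as stated: at that threshold $\deg(\D-Q)=g-1$, and Riemann--Roch gives $\dim(\D-Q)=\i(\D-Q)$, which vanishes only if $\D-Q$ is non-special. Either assume non-speciality (this is exactly what the paper exploits later, via the ordinary tower), or take the cruder bound $\deg\D\geq n+2g-1$, which suffices for the qualitative $O(n)$ statement and costs only a constant factor. Second, the phrase ``invoke an asymptotically good family over $\Fq$'' hides real content: for arbitrary $q$ one needs $A(q)>0$, which is a theorem (Serre, via class field towers) and not an obvious fact; the explicit Garcia--Stichtenoth towers live over $\F_{q^2}$, and passing to $\Fq$ requires either descent or the use of places of degree $>1$, which is why the paper's later refinements (Theorem~\ref{theo_evalder}, Proposition~\ref{spec_chud4}, Proposition~\ref{spec_chud2}) allow higher-degree places. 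For the bare existence of some $C_q$ your outline is adequate, but these are the places where the argument would need to be made precise.
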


Moreover, one can give explicit values for $C_q$:

\begin{proposition} \label{theobornes}
Let $q$ be a power of the prime $p$. The best known values for the constant $C_q$ defined in the previous theorem are:
$$
C_q = \left \{
	\begin{array}{lll}
		\hbox{if } q=2 & \hbox{then \ } 19.6 &   \hbox{\cite{ceoz,bapi}} \cr
		\hbox{else if } q=3 &  \mbox{then \ } 27 & \hbox{\cite{ball1}} \cr
		\hbox{else if } q=p \geq 5 &  \mbox{then \ }   3(1+ \frac{4}{q-3}) & \hbox{\cite{bach}} \\
		\hbox{else if } q=p^2 \geq 25 & \hbox{then \ }   2(1+\frac{2}{\sqrt{q}-3}) & \hbox{\cite{bach}} \\
		\hbox{else if } q=p^{2k} \geq 16 & \hbox{then \ } 2(1+\frac{p}{\sqrt{q}-3}) & \hbox{\cite{ball3}} \\
		\hbox{else if } q \geq 16 & \hbox{then \ }  3(1+\frac{2p}{q-3}) & \hbox{\cite{baro1,balbro,balb}}\\
		\hbox{else if } q > 3 & \hbox{then \ }  6(1+\frac{p}{q-3}) & \hbox{\cite{ball3}}.
	\end{array}\right .
$$
\end{proposition}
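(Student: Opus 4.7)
The plan is to treat this statement as a compendium of results established via the Chudnovsky--Chudnovsky algorithm and its variants, and to sketch the uniform mechanism that underlies all cases. First I would recall the generic Chudnovsky--Chudnovsky construction: given an algebraic function field $\mathbf{F}/\mathbb{F}_q$ of genus $g$ with a place $Q$ of degree $n$, a divisor $\mathcal{D}$ of suitable degree, and a sufficient supply of rational places $P_1,\ldots,P_N$ avoiding $\mathrm{supp}\,\mathcal{D}\cup\{Q\}$, one obtains a symmetric bilinear algorithm by evaluating suitable Riemann--Roch classes at $Q$ and at the $P_i$. This yields $\mus_q(n)\leq N$ whenever the evaluation maps are respectively surjective at $Q$ and injective at $P_1+\cdots+P_N$ at the appropriate levels.

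To turn this into a linear bound $\mus_q(n)\leq C_q n$ uniform in $n$, one applies the construction to a tower $(\mathbf{F}_k/\mathbb{F}_q)_k$ in which the ratio $\mathrm{B}_1(\mathbf{F}_k/\mathbb{F}_q)/g_k$ tends to a positive limit (Drinfel'd--Vl\u{a}du\c{t}-optimal or near-optimal towers). The Riemann--Roch constraints essentially force $n$ to lie between $g_k$ and $\mathrm{B}_1(\mathbf{F}_k/\mathbb{F}_q)$, giving an asymptotic constant $C_q$ equal to roughly twice the inverse of this ratio, up to corrections accounting for rational places reserved for $\mathcal{D}$ and for the place of degree $n$. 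For $q=p\geq 5$, $q=p^2\geq 25$, $q=p^{2k}\geq 16$ and $q\geq 16$, one plugs in the relevant Garcia--Stichtenoth tower, possibly after a definition-field descent that introduces the characteristic factor $p$. The advertised constants $3(1+4/(q-3))$, $2(1+2/(\sqrt q-3))$, $2(1+p/(\sqrt q-3))$ and $3(1+2p/(q-3))$ arise from optimising the divisor $\mathcal{D}$ and allowing evaluations with multiplicity at rational and degree-$2$ places; the appearance of a factor $3$ rather than $2$ reflects whether the construction must resort to degree-$2$ places to compensate for a shortage of rational ones.

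The two small-characteristic cases $q=2$ and $q=3$ are the hardest because Drinfel'd--Vl\u{a}du\c{t}-optimal towers are not directly available over such small fields and because abundant $2$- or $3$-torsion in the Jacobian obstructs the classical construction of non-special divisors needed for the Riemann--Roch evaluation arguments. For $q=2$ one relies on the algorithms of \cite{ceoz,bapi} combining constructions over $\mathbb{F}_4$ descended to $\mathbb{F}_2$ with evaluations at higher-degree places, while for $q=3$ the approach of \cite{ball1} exploits the Garcia--Stichtenoth tower over $\mathbb{F}_9$ descended to $\mathbb{F}_3$, compensating for the shortage of rational places by evaluating at places of degree $2$ and $4$. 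The combinatorial cost of such higher-degree evaluations is what pushes the constants up to the numerical values $19.6$ and $27$ quoted in the statement.

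The main obstacle throughout is the simultaneous enforcement of surjectivity at the degree-$n$ place, injectivity on the evaluation support, and the existence of non-special divisors of the prescribed degree in spite of the $2$- or $3$-torsion obstruction in the divisor class group. Once this combinatorial optimisation is handled, each individual bound reduces to the arithmetic of the relevant tower (ratio of rational places to genus, genus gap between consecutive steps, behaviour of the ramification on descent), so the proof is assembled by invoking the papers listed in the statement after each case.
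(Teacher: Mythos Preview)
The paper does not actually prove this proposition: it is presented as a survey of known results, with each line carrying a citation to the paper where the bound is established. The only justification given in the text is the remark immediately following the proposition, which explains that $C_2=19.6$ is obtained by combining the uniform bound $\mus_2(n)\leq\frac{477}{26}n+\frac{45}{2}$ from \cite{bapi} for $n\geq 19$ with the explicit values of $\mus_2(n)$ listed in \cite[Table~1]{ceoz} for $n\leq 18$. So your expository sketch of the Chudnovsky--Chudnovsky mechanism, while a reasonable piece of context, goes well beyond what the paper itself supplies and is not really comparable to a ``paper's proof'' here.

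That said, if your sketch is meant to summarise the cited works, two details are off. For $q=2$, the bound from \cite{bapi} uses the descent to $\F_2$ of the first Garcia--Stichtenoth tower defined over $\F_{16}$ (this is stated explicitly in the present paper's introduction), not a construction over $\F_4$. For $q=3$, the reference \cite{ball1} predates the use of descent and higher-degree evaluation places in this context; the constant $27$ there comes from a much cruder application of the original Chudnovsky--Chudnovsky algorithm, not from descending a tower over $\F_9$ with degree-$2$ and degree-$4$ evaluations as you describe. These are exactly the refinements the present paper introduces to \emph{improve} on $C_3=27$, so attributing them to \cite{ball1} misrepresents the history.
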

\Remark The estimate $C_2=19.6$ is obtained by combining the general uniform bound \linebreak[4]${\mus_2(n) \leq \frac{477}{26} n + \frac{45}{2}}$ from \cite{bapi} for $n$ greater than 19, and the values of $\mus_2(n)$ given in \cite[Table 1]{ceoz} for~$n\leq 18$.\\

In order to obtain these good estimates for the constant $C_q$, S. Ballet has given in \cite{ball1} some easy to verify conditions allowing the use of the D.V. and G.V. Chudnovsky  algorithm. Then S. Ballet and R. Rolland have generalized in \cite{baro1} the algorithm using places of degree one and two. Let us present the best finalized version of this algorithm in this direction, which is a generalization of the algorithm of Chudnovsky-Chudnovsky type introduced by N. Arnaud in \cite{arna1} and developed later by M. Cenk and F. \"Ozbudak in \cite{ceoz}. This generalization uses several coefficients in the local expansion at each place $P_i$ instead of just the first one. Due to the way to obtain the local expansion of a product from the local expansion of each term, the bound for the symmetric bilinear complexity involves the complexity notion $\widehat{M_q}(u)$ introduced by M. Cenk and F. \"Ozbudak in \cite{ceoz} and defined as follows:
\begin{definition}
We denote by $\widehat{M_q}(u)$ the minimum number of multiplications needed in $\F_q$ in order to obtain coefficients of the product of two arbitrary $u$-term polynomials modulo $x^u$ in $\F_q[x]$.
\end{definition}
Remark  that with the notations introduced in Section 1, one has ${\widehat{M_q}(u)=\mu\Big(\left(\F_q[x]/ (x^u)\right) /\F_q\Big)}$.\\
For instance, we know that for all prime powers $q$, we have $\widehat{M_q}(2) \leq 3$ by \cite{ceoz2}.\\

Note that in \cite{rand3}, Randriambololona gives an even more general version of the Chudnovsky-Chudnovsky algorithm, which encompass the case of non-necessarily symmetric algorithms. This generalization is not relevant here, since we focus on the symmetric bilinear complexity; thus we introduce the generalized symmetric algorithm Chudnovsky-Chudnovsky type described in~\cite{ceoz}.

\begin{theorem} \label{theo_evalder}
Let
\begin{itemize}
	\item $q$ be a prime power,
	\item $\FF/\F_q$ be an algebraic function field,
	\item $Q$ be a degree $n$ place of $\FF/\F_q$,
	\item $\D$ be a divisor of $\FF/\F_q$,
	\item ${\mathscr P}=\{P_1,\ldots,P_N\}$ be a set of $N$ places of arbitrary degree,
	\item ${t_1, \ldots, t_N}$ be local parameters for ${P_1, \ldots,P_N}$ respectively,
	\item ${u_1,\ldots,u_N}$ be positive integers.
\end{itemize}
We suppose that $Q$ and all the places in $\mathscr P$ are not in the support of ${\mathcal D}$ and that:
\begin{enumerate}[a)]
	\item the map
	$$
	 \mathsf{Ev}_Q:  \left |
	\begin{array}{ccl}
	\Ld{} & \rightarrow & \F_{q^n}\simeq \mathrm{F}_Q\\
	f & \longmapsto & f(Q)	
	\end{array} \right.
	$$ 
	is onto,
	\item the map
	$$
	 \mathsf{Ev}_{\mathscr P} :  \left |
	\begin{array}{ccl}
	\Ld{2} & \longrightarrow & \left(\F_{q^{\deg P_1}}\right)^{u_1} \times \left(\F_{q^{\deg P_2}}\right)^{u_2} \times \cdots \times \left(\F_{q^{\deg P_N}}\right)^{u_N} \\
	f & \longmapsto & \big(\varphi_1(f), \varphi_2(f), \ldots, \varphi_N(f)\big)
	\end{array} \right.
	$$
	is injective, where each application $\varphi_i$ is defined by
	$$
	\varphi_i : \left |
	\begin{array}{ccl}
	\Ld{2} & \longrightarrow & \left(\F_{q^{\deg P_i}}\right)^{u_i} \\
          f & \longmapsto & \left(f(P_i), f'(P_i), \ldots, f^{(u_i-1)}(P_i)\right)
	\end{array} \right.
 	$$
	with $f = f(P_i) + f'(P_i)t_i + f''(P_i)t_i^2+ \ldots + f^{(k)}(P_i)t_i^k + \ldots $, 
the local expansion at $P_i$ of $f$ in ${\Ld{2}}$, with respect to the local parameter~$t_i$. 
Note that we set ${f^{(0)} :=f}$.
\end{enumerate}
Then 
$$
\mus_q(n) \leq \displaystyle \sum_{i=1}^N \mus_q(\deg P_i) \widehat{M_{q^{\deg P_i}}}(u_i).
$$
\end{theorem}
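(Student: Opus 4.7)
The plan is to turn the data of $Q$, $\D$, $\mathscr P$, and the $u_i$ into an explicit symmetric bilinear algorithm for the multiplication in $\F_{q^n}\simeq \mathrm{F}_Q$. First, I would use hypothesis~a) to fix an $\F_q$-linear section $s:\F_{q^n}\to\Ld{}$ of $\mathsf{Ev}_Q$. Given $x,y\in\F_{q^n}$, set $f=s(x)$ and $g=s(y)$, so that $f(Q)=x$, $g(Q)=y$, and the product $fg$ lies in $\Ld{2}$ with $(fg)(Q)=xy$. By hypothesis~b), $\mathsf{Ev}_{\mathscr P}$ is an $\F_q$-linear injection on $\Ld{2}$, hence admits an $\F_q$-linear left inverse $\rho$. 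Consequently
$$
xy \;=\; \mathsf{Ev}_Q\bigl(\rho(\varphi_1(fg),\ldots,\varphi_N(fg))\bigr),
$$
and the composed map $\mathsf{Ev}_Q\circ\rho$ is $\F_q$-linear. Thus the only genuinely bilinear step in the evaluation of $xy$ from $(x,y)$ is the computation of the tuples $\varphi_i(fg)$ from $\varphi_i(f)$ and $\varphi_i(g)$, all other operations contributing only to the linear forms and reconstruction vectors of the algorithm.

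Next I would analyse the cost of each local computation. Since $f = \sum_{k\geq 0} f^{(k)}(P_i)\,t_i^k$ and similarly for $g$, the truncation of $fg$ modulo $t_i^{u_i}$ depends only on the truncations of $f$ and $g$ modulo $t_i^{u_i}$; equivalently, $\varphi_i(fg)$ is obtained by multiplying $\varphi_i(f)$ and $\varphi_i(g)$ in the $\F_{q^{\deg P_i}}$-algebra $\F_{q^{\deg P_i}}[t_i]/(t_i^{u_i})$. By the very definition of $\widehat{M_{q^{\deg P_i}}}(u_i)$, this can be done with $\widehat{M_{q^{\deg P_i}}}(u_i)$ bilinear multiplications over $\F_{q^{\deg P_i}}$. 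Each such multiplication in $\F_{q^{\deg P_i}}$ can in turn be realised with $\mus_q(\deg P_i)$ bilinear multiplications over $\F_q$ via an optimal symmetric algorithm for $\F_{q^{\deg P_i}}/\F_q$. Composing these two symmetric algorithms gives a symmetric bilinear scheme computing $\varphi_i(fg)$ from $\varphi_i(f)$ and $\varphi_i(g)$ using at most $\mus_q(\deg P_i)\,\widehat{M_{q^{\deg P_i}}}(u_i)$ base-field bilinear multiplications.

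To conclude I would assemble the $N$ local algorithms: apply them in parallel to the coordinates $\varphi_i(f)=\varphi_i\circ s(x)$ and $\varphi_i(g)=\varphi_i\circ s(y)$ (each of these being $\F_q$-linear in $x$ resp.\ $y$), then apply the linear map $\mathsf{Ev}_Q\circ\rho$ to the collected outputs. The resulting expression has the form
$$
xy \;=\; \sum_{i=1}^N \sum_{j=1}^{\mus_q(\deg P_i)\,\widehat{M_{q^{\deg P_i}}}(u_i)} \phi_{i,j}(x)\,\phi_{i,j}(y)\,w_{i,j},
$$
with $\phi_{i,j}\in\F_{q^n}^{\vee}$ and $w_{i,j}\in\F_{q^n}$, which is a symmetric multiplication algorithm of length $\sum_{i=1}^N \mus_q(\deg P_i)\,\widehat{M_{q^{\deg P_i}}}(u_i)$.

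The main delicate point is the bookkeeping: one must verify that the composition of two symmetric bilinear algorithms (the one computing multiplication in the truncated polynomial algebra over $\F_{q^{\deg P_i}}$ and the one realising a product in $\F_{q^{\deg P_i}}$ through $\mus_q(\deg P_i)$ products in $\F_q$) is again symmetric and that its length is the product of the two individual lengths, so that the hypotheses a) and b) (used only to provide the $\F_q$-linear maps $s$ and $\rho$) indeed suffice to obtain a bona fide symmetric decomposition of the multiplication tensor of $\F_{q^n}/\F_q$. Everything else is a straightforward linear-algebra routing of inputs and outputs.
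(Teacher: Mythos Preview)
The paper does not supply its own proof of this theorem; it is quoted from Cenk--\"Ozbudak \cite{ceoz}. Your outline is exactly the standard argument one finds there: lift $x,y$ via a linear section of $\mathsf{Ev}_Q$, multiply in $\Ld{2}$, recover the product via a left inverse of the injective $\mathsf{Ev}_{\mathscr P}$, and observe that the only genuinely bilinear step is the local computation of each $\varphi_i(fg)$ from $\varphi_i(f)$ and $\varphi_i(g)$, which is a truncated polynomial multiplication over $\F_{q^{\deg P_i}}$ further decomposed over $\F_q$.

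One point deserves more care than your final paragraph gives it. The paper explicitly sets $\widehat{M_q}(u)=\mu\bigl((\F_q[x]/(x^u))/\F_q\bigr)$, the \emph{non-symmetric} bilinear complexity, while the conclusion bounds the \emph{symmetric} quantity $\mus_q(n)$. You assert that both algorithms being composed are symmetric, but an optimal algorithm realising $\widehat{M_q}(u)$ is not required to be symmetric. If the local truncated-polynomial algorithm uses distinct linear forms $\alpha_j\neq\beta_j$, then after composition the resulting $\F_q$-linear forms $\lambda_k\circ\alpha_j\circ\varphi_i\circ s$ and $\lambda_k\circ\beta_j\circ\varphi_i\circ s$ differ, and the global decomposition of the multiplication tensor is not symmetric. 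In the applications made later in the paper only $u_i\in\{1,2\}$ occur, and the usual three-multiplication formula for $\F_q[t]/(t^2)$ is symmetric, so the issue is harmless there; but for the theorem in the generality stated you should either replace $\widehat{M_q}$ by its symmetric analogue or flag this hypothesis explicitly rather than treating it as mere bookkeeping.
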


In particular, we will consider in this paper a specialization of this algorithm which is described in Section~\ref{sect:newbound} and requires the additional hypothesis that there exists a non-special divisor of degree ${g-1}$; this will motivate the study of ordinary towers.

\subsubsection{Asymptotic bounds for the extensions of $\F_2$ and $\F_3$}

From the asymptotic point of view, let us recall that I. Shparlinski, M. Tsfasman and S. Vl\u{a}du\c{t} have given in \cite{shtsvl} many interesting remarks on the algorithm of D.V. and G.V. Chudnovsky. In particular, they considered the following asymptotic bounds for the bilinear complexity
$$
\mathrm{M}_q = \limsup_{k \rightarrow \infty} \frac{\mu_q(k)}{k}
$$
and we will also consider its symmetric equivalent
$$
\Ms_q = \limsup_{k \rightarrow \infty} \frac{\mus_q(k)}{k}.
$$

Recently, with the help of the torsion-limit technique and Riemann-Roch systems, Cascudo, Cramer and Xing improved in \cite{cacrxing3} the upper bounds for $\Ms_q$ in the case where $q$ is small (${q \in \{2,3,4,5\}}$). In particular, they obtained:
$$
\Ms_2 \leq 7.23  \qquad \mbox{and} \qquad \Ms_3 \leq 5.45.
$$

\subsection{Motivations -- New results established in this paper}
Contrary to what is mentioned in \cite{cacrxing3} by Cascudo, Cramer and Xing, and in \cite{babe1} by Bassa and Beelen, we will show that an ordinary tower may lead to better uniform results for the tensor rank  of multiplication in any extension of $\F_2$ and $\F_3$ than a non-ordinary one because of the link between maximal $p$-rank and existence of a non-special divisor of degree ${g-1}$. Indeed, in \cite{babe1}, it reads:\textsl{
``A detailed study of the $p$-rank in towers is relevant for their applications, see \cite{cacrxing0}. For example, although both of the towers introduced in \cite{gast3} and \cite{gast} have the same limit and hence are equally influential for applications in coding theory, a detailed study of their $p$-rank reveals that in fact the latter \textsf{(which turns out not to be ordinary, according to \cite{cacrxing3})} is more appropriate for other kinds of applications, e.g. secure multiparty computation and fast bilinear multiplication.''}

We know that the existence of a non-special divisor of degree ${g-1}$ in the function field ${\FF/\Fq}$ is of crucial importance in the performance of Chudnovsky-Chudnovsky type algorithms \cite{balb,rand3}. In the case where the definition field $\Fq$  is such that ${q\geq4}$, then according to \cite{balb} there always exists a non-special divisor of degree ${g-1}$. Nevertheless, the problem persists in the case where the definition field is small, namely $\F_2$ or $\F_3$. In \cite{bapi}, to avoid this obstacle, we substituted \mbox{non-special} divisors of degree ${g-1}$ for zero-dimensional divisors whose degree is as close as possible to ${g-1}$ in the descent over $\F_2$ of the original Garcia-Stichtenoth tower presented in \cite{gast} and defined over $\F_{16}$; non-special divisors of degree ${g-1}$ being the borderline case of zero-dimensional divisors. However, according to a result of Bassa and Beelen in  \cite{babe1}, the second optimal Garcia-Stichtenoth tower introduced in \cite{gast3} is ordinary. But it was shown in \cite{bariro} that there always exists a non-special divisor of degree $g-1$ in any ordinary function field .
This leads us to an improvement of the bounds for the uniform tensor rank of multiplication in any finite extension of $\F_2$ and $\F_3$, thanks to the existence of a non-special divisor of degree ${g-1}$ in any function field of some ordinary towers defined respectively over $\F_2$ and $\F_3$, as it will be proven in this paper.
In particular, we prove that
$$
\mus_2(n) \leq 16.16 n\qquad \mbox{and} \qquad \mus_3(n) \leq 7.732 n,
$$
which improves the results obtained in \cite{bapi}. Note that the difficulty to obtain non-asymptotic estimations of the 2-torsion points in all steps of the tower used in \cite{cacrxing3} is an obstruction to obtain uniform bounds as we get in this paper.

\section{Definitions and related properties of the $p$-rank}

\begin{definition}\label{defrank}
The $p$-rank $\gamma(\FF)$, also called invariant de Hasse-Witt, of a function field $\FF$ with constant field $\overline{\F_p}$, the algebraic closure of the finite field $\F_p$, is defined as the dimension over $\F_p$ of the group of divisor classes of degree zero of order $p$. If the function field is defined over a finite field $\F_q$, we define its $p$-rank as the $p$-rank of the function field $\FF\overline{\Fq}$, obtained by extending the constant field to the algebraic closure of $\Fq$. \end{definition}

It can be shown that :

\begin{proposition}
If $\FF/\Fq$ be a function field of genus $ g(\FF)$, then ${0 \leq \gamma(\FF) \leq g(\FF)}$.
\end{proposition}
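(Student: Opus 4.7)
The lower bound $\gamma(\FF) \geq 0$ is immediate from the definition, since $\gamma(\FF)$ is by construction the dimension of an $\F_p$-vector space. The whole content of the statement is therefore the upper bound $\gamma(\FF) \leq g(\FF)$, which I plan to approach through the Jacobian of $\XX$.

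First I would translate the definition into geometric language by base-changing to $\overline{\Fq}$. The group of degree-zero divisor classes of $\FF\overline{\Fq}$ is exactly $\Jac(\XX)(\overline{\Fq})$, where $\XX$ denotes the curve associated to $\FF\overline{\Fq}$, and its elements of order dividing $p$ form $\Jac(\XX)[p](\overline{\Fq})$. Since $\Jac(\XX)$ is an abelian variety of dimension $g(\FF)$ and the base field is algebraically closed, this subgroup is elementary abelian and its $\F_p$-dimension is exactly $\gamma(\FF)$.

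The heart of the argument is the comparison of this dimension with $g(\FF)$. My preferred route goes through the Hasse--Witt matrix: the absolute Frobenius endomorphism $F$ acts $p$-semilinearly on $H^1(\XX,\O_{\XX})$, which has dimension $g(\FF)$ over $\overline{\Fq}$ by Serre duality. A classical theorem (Serre) identifies $\gamma(\FF)$ with the stable rank $\lim_{n \to \infty} \mathrm{rk}(F^n)$ of this operator, so the inequality will follow from the fact that each iterate $F^n$ is an endomorphism of a $g(\FF)$-dimensional vector space and hence has rank at most $g(\FF)$.

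The main obstacle is not algebraic but expository: one must carefully justify the identification of $\gamma(\FF)$ with the stable Frobenius rank, which relies on nontrivial facts from the theory of abelian varieties in characteristic $p$ (Mumford). A more self-contained alternative would be to work directly with the finite commutative group scheme $\Jac(\XX)[p]$: it has order $p^{2g(\FF)}$, and its connected component has order at least $p^{g(\FF)}$ (equivalently, the Frobenius isogeny $\Jac(\XX) \to \Jac(\XX)^{(p)}$ has inseparable degree at least $p^{g(\FF)}$). Hence the maximal \'etale quotient, whose $\overline{\Fq}$-points recover $\Jac(\XX)[p](\overline{\Fq})$, has order at most $p^{g(\FF)}$, which forces $\gamma(\FF) \leq g(\FF)$.
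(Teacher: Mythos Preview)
Your proof is correct and gives a standard justification for this classical fact. Both routes you sketch are valid: the Hasse--Witt interpretation (stable rank of Frobenius on $H^1(\XX,\O_{\XX})$) and the group-scheme argument via the connected--\'etale decomposition of $\Jac(\XX)[p]$ each yield the bound $\gamma(\FF)\leq g(\FF)$ cleanly.

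However, you should be aware that the paper does not actually prove this proposition. It is introduced with the phrase ``It can be shown that'' and stated without proof, treated as a well-known background result on the $p$-rank. So there is no proof in the paper to compare yours against; you have simply supplied the argument that the authors chose to omit. Your write-up is more than adequate for that purpose, though in a final version you might streamline it by committing to one of the two approaches rather than presenting both.
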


\begin{definition}
A function field $\FF/\Fq$ is called ordinary if ${\gamma(\FF) = g(\FF)}$.\\
A tower of function fields  ${\mathcal{T} = \big(\FF_n/\F_q\big )_{n\in \N}}$ is said ordinary if for any~${n\geq0}$, $\FF_n$ is such that ${\gamma(\FF_n) = g(\FF_n)}$, i.e. if any step of the tower is an ordinary function field.
\end{definition}

Let us recall the following result from \cite{bariro}:

\begin{corollary} \label{ordinary}
If $\FF$ is a function field of genus ${g>0}$ defined over $\F_2$ or over $\F_3$, then there is always a  degree ${g-1}$ zero-dimensional divisor in $\FF$.
\end{corollary}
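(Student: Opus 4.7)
The plan is to reduce the assertion to a class-counting inequality and then appeal to the main theorem of \cite{bariro}. First, by Riemann-Roch applied to a divisor $\D$ of degree $g-1$, one has $\dim\D=\i(\D)$, so zero-dimensionality and non-speciality coincide at this degree; the statement is therefore equivalent to exhibiting a non-special divisor class of degree $g-1$. I then set up the counting: writing $h:=h(\FF/\Fq)$ and $N_r$ for the number of degree-$(g-1)$ classes of dimension exactly $r$, we have $\sum_r N_r = h$; and since a class of dimension $r$ contains exactly $\tfrac{q^r-1}{q-1}$ distinct effective divisors,
$$\mathrm{A}_{g-1}\;=\;\sum_{r\ge 1}N_r\cdot\tfrac{q^r-1}{q-1}\;\ge\;h-N_0.$$
Thus $N_0\ge h-\mathrm{A}_{g-1}$, and it suffices to prove the strict inequality $\mathrm{A}_{g-1}<h$.

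Next I would make this inequality explicit. Extracting the coefficient of $t^{g-1}$ from $\sum_n\mathrm{A}_n t^n=\tfrac{L_\FF(t)}{(1-t)(1-qt)}$ and folding in the functional equation $a_{2g-i}=q^{g-i}a_i$ yields the closed form
$$h-\mathrm{A}_{g-1}\;=\;a_g+\tfrac{q}{q-1}\sum_{i=0}^{g-1}a_i\bigl(1+(q-2)q^{g-1-i}\bigr),$$
which collapses to the particularly simple $a_g+2\sum_{i=0}^{g-1}a_i$ when $q=2$, and to an analogous, only slightly longer identity when $q=3$.

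The main obstacle is the positivity of this signed expression: the coefficients $a_i$ can be negative and are controlled individually only by Weil's inequality $|a_i|\le\binom{2g}{i}q^{i/2}$, so positivity is not apparent from the formula alone. This is precisely where the argument of \cite{bariro} does the decisive work: rewriting $h-\mathrm{A}_{g-1}$ in terms of the Frobenius eigenvalues $\pi_j$ through $L_\FF(t)=\prod_j(1-\pi_j t)(1-\overline{\pi}_j t)$, and exploiting the very small value of $q\in\{2,3\}$, one regroups the right-hand side into a combination of manifestly positive factors of the form $(\sqrt q-\pi_j)(\sqrt q-\overline{\pi}_j)$, obtaining $h>\mathrm{A}_{g-1}$ unconditionally. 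The resulting $N_0\ge 1$ then produces the desired zero-dimensional (equivalently, non-special) divisor of degree $g-1$.
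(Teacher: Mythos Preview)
Your reduction to the class-counting inequality and the closed form for $h-\mathrm{A}_{g-1}$ are correct and are indeed the starting point of \cite{bariro}. The gap is in the final paragraph, where you assert that for $q\in\{2,3\}$ one obtains $h>\mathrm{A}_{g-1}$ \emph{unconditionally}. This is false. Take the elliptic curve $y^2+y=x^3+x+1$ over $\F_2$: here $g=1$, $L_\FF(t)=1-2t+2t^2$, so $h=L_\FF(1)=1=\mathrm{A}_0=\mathrm{A}_{g-1}$, and your own identity indeed returns $a_1+2a_0=-2+2=0$. There is no zero-dimensional divisor of degree~$0$ on this curve. Hence no regrouping of the right-hand side into ``manifestly positive'' factors of the shape $(\sqrt q-\pi_j)(\sqrt q-\overline{\pi}_j)$ can exist in general, and the step where you defer to \cite{bariro} mis-states what that paper actually proves.

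What is missing --- both from the printed statement and from your argument --- is the hypothesis that $\FF$ be \emph{ordinary}. The paper clearly intends this: the introduction summarizes \cite{bariro} as giving the result ``in any ordinary function field'', and Corollary~\ref{cor:divnonspe} invokes Proposition~\ref{prop_tourordi} precisely to supply ordinarity before applying the present corollary, a step that would be superfluous otherwise. In \cite{bariro} ordinarity enters exactly at the positivity step (for $q=p$, ordinarity is equivalent to $p\nmid a_g$, which is the arithmetic leverage one needs on the signed sum $a_g+\tfrac{q}{q-1}\sum_{i<g}a_i(1+(q-2)q^{g-1-i})$); the counterexample above is supersingular, consistent with this. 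Your sketch never touches ordinarity, so the decisive step has no purchase and the argument cannot close.
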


Moreover, directly from Definition \ref{defrank}, we can deduce the following lemma:

\begin{lemma}\label{conservationord}
Let ${r\geq 0}$ be an integer. If we set ${\FF/\F_{q^r} := \mathbf{H}/\Fq \otimes \F_{q^r}}$, then ${\mathbf{H}/\Fq}$ is ordinary if and only if ${\FF/\F_{q^r}}$ is ordinary.
\end{lemma}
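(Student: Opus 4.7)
The plan is to reduce both $p$-ranks, via Definition~\ref{defrank}, to the $p$-rank of a single function field over a common algebraic closure, and then appeal to the invariance of the genus under finite constant field extensions.

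First I would unpack the definitions. By Definition~\ref{defrank}, $\gamma(\mathbf{H})$ is the $p$-rank of $\mathbf{H}\overline{\F_q}/\overline{\F_q}$, while $\gamma(\FF)$ is the $p$-rank of $\FF\,\overline{\F_{q^r}}/\overline{\F_{q^r}}$. Since $\F_{q^r}/\F_q$ is a finite algebraic extension, the algebraic closures coincide: $\overline{\F_{q^r}}=\overline{\F_q}$. Consequently,
\[
\FF\,\overline{\F_{q^r}} \;=\; \bigl(\mathbf{H}\otimes_{\F_q}\F_{q^r}\bigr)\otimes_{\F_{q^r}}\overline{\F_{q^r}} \;=\; \mathbf{H}\otimes_{\F_q}\overline{\F_q} \;=\; \mathbf{H}\overline{\F_q},
\]
so the two function fields whose $p$-ranks compute $\gamma(\mathbf{H})$ and $\gamma(\FF)$ are literally equal, and hence $\gamma(\mathbf{H})=\gamma(\FF)$.

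Second, I would invoke the standard fact that the genus is preserved under separable constant field extensions (this holds for any finite extension of a perfect field; see Stichtenoth, Theorem III.6.3): one has $g(\FF)=g(\mathbf{H})$. Combining the two equalities,
\[
\gamma(\mathbf{H})=g(\mathbf{H}) \quad \Longleftrightarrow \quad \gamma(\FF)=g(\FF),
\]
which is precisely the statement that $\mathbf{H}/\F_q$ is ordinary if and only if $\FF/\F_{q^r}$ is ordinary.

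There is no serious obstacle here; the argument is essentially a bookkeeping verification once the definitions are unfolded. The only point requiring a bit of care is ensuring that $\mathbf{H}\otimes_{\F_q}\F_{q^r}$ is indeed a function field of one variable (with $\F_{q^r}$ as exact constant field) so that Definition~\ref{defrank} applies to it in a meaningful way; this follows from the assumption that $\F_q$ is the full constant field of $\mathbf{H}$ combined with the separability of $\F_{q^r}/\F_q$.
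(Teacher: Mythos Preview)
Your proof is correct and follows essentially the same route as the paper's own argument: both reduce the equality $\gamma(\mathbf{H})=\gamma(\FF)$ to the observation that $\overline{\F_{q^r}}=\overline{\F_q}$ (so the definition of $p$-rank refers to the same object), and combine this with the invariance of the genus under constant field extension. Your version is slightly more explicit about the tensor-product identification and the separability hypothesis needed for genus invariance, but there is no substantive difference.
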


\begin{proof}
Note that the genus does not change under constant field extension or descent.
It follows from Definition \ref{defrank} that  $p$-rank does not change under constant field extension 
or descent since the $p$-rank of a function field $\FF/\F_q$ defined over a finite field $\F_q$ is equal to 
the $p$-rank of $\FF\overline{\F_q}$, and  $\overline{\F_q}=\overline{\F_p}$.
\end{proof}

To conclude this section, we recall the following result  which is proven in \cite[Lemma 6, 2.]{babe1}:

\begin{lemma}\label{conservationorddesc}
If $\HH/\FF$ is a finite extension of function fields with same constant field $\Fq$, then 
$$
g(\HH) - \gamma(\HH) \geq g(\FF) - \gamma(\FF).
$$
In particular, if $\HH$ is ordinary then so is $\FF$.
\end{lemma}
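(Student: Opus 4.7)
The plan is to translate the inequality into a statement about Jacobians, reduce to the case of an algebraically closed base field, and then invoke Poincar\'e's complete reducibility theorem together with the behaviour of the $p$-rank under isogeny and direct products.

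First, by Lemma~\ref{conservationord} both the genus and the $p$-rank are unchanged under constant-field extension; hence, after tensoring with $\overline{\F_p}$, it suffices to prove the inequality when $\FF\subset\HH$ are function fields over $\overline{\F_p}$. This inclusion corresponds to a finite morphism $\pi\colon Y\to X$ of smooth projective curves. Assume first that $\pi$ is separable. Then the pullback $\pi^\ast\colon\Jac(X)\to\Jac(Y)$ satisfies $\pi_\ast\circ\pi^\ast=[\deg\pi]_{\Jac(X)}$, so $\pi^\ast$ has finite kernel and its image $A:=\pi^\ast\Jac(X)$ is an abelian subvariety of $\Jac(Y)$ isogenous to $\Jac(X)$. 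Poincar\'e's complete reducibility theorem then furnishes an abelian subvariety $B\subseteq\Jac(Y)$ such that $\Jac(Y)$ is isogenous to $A\times B$, and hence to $\Jac(X)\times B$.

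The second ingredient is that the $p$-rank of an abelian variety is an isogeny invariant, is additive on direct products, and is always bounded above by the dimension. Applied to the decomposition above, these properties give
$$\gamma(\HH)=\gamma(\FF)+\gamma(B),\qquad g(\HH)=g(\FF)+\dim B,$$
so that
$$g(\HH)-\gamma(\HH)=\bigl(g(\FF)-\gamma(\FF)\bigr)+\bigl(\dim B-\gamma(B)\bigr)\ \geq\ g(\FF)-\gamma(\FF),$$
which is the desired inequality. If $\pi$ is not separable, I would factor it as a separable morphism composed with a power of the relative Frobenius; the Frobenius part induces an isogeny on Jacobians and therefore preserves both $g$ and $\gamma$, which reduces the general case to the separable one just treated.

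The ``in particular'' assertion is then immediate: if $\HH$ is ordinary, the left-hand side vanishes, so $g(\FF)\leq\gamma(\FF)$; combined with the trivial bound $\gamma(\FF)\leq g(\FF)$, this forces equality, i.e.\ $\FF$ is ordinary. The main subtlety I anticipate is the appeal to Poincar\'e reducibility together with the standard structural facts about the $p$-rank — these are classical results, but they rely crucially on working over an algebraically closed base field, which is precisely what the preliminary reduction via Lemma~\ref{conservationord} provides.
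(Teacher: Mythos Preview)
Your argument is correct. The paper itself, however, does not supply a proof of this lemma: it is simply quoted from Bassa--Beelen as \cite[Lemma~6,~2.]{babe1}, so there is no in-paper argument to compare against.

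One minor remark on the write-up: the separable/inseparable case split is unnecessary. The identity $\pi_\ast\circ\pi^\ast=[\deg\pi]$ on $\Jac(X)$ holds for \emph{any} finite morphism $\pi$ of curves (over an algebraically closed field this is just $\pi_\ast\pi^\ast P=\sum_{Q\mid P}e(Q\mid P)\,P=(\deg\pi)P$), so $\pi^\ast$ always has finite scheme-theoretic kernel and the Poincar\'e reducibility step applies directly without first peeling off a Frobenius factor. You can therefore drop the tentative ``I would factor it'' clause and present a single uniform argument.
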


\section{Good ordinary sequences of function fields defined over $\F_2$ or $\F_3$} \label{asympexactsequence}

In this section, we present sequences of algebraic function fields defined over $\F_2$ or $\F_3$, constructed from the well-known Garcia-Stichtenoth tower defined in \cite{gast3}, which will be used to obtain new bounds for the tensor rank of multiplication.

\subsection{Definition of  Garcia-Stichtenoth's towers} \label{sectiondefinition}

Let  us consider a finite field $\F_{q^2}$ with ${q=p^r}$, for $p$ a prime number and $r$ an integer. 
We consider the Garcia-Stichtenoth's elementary abelian tower $T_0$ over $\F_{q^2}$ constructed in \cite{gast3} 
and defined by the sequence ${(\FF_0, \FF_1,\FF_2,\ldots )}$ where 
$$
\FF_0 := \F_{q^2}(x_0)
$$
is the rational function field over $\F_{q^2}$, and for any $i \geq 0$, ${\FF_{i+1} := \FF_i(x_{i+1})}$ with $x_{i+1}$ satisfying the following equation:  
$$
x^q_{i+1}+x_{i+1} = \frac{x^q_i}{x_i^{q-1} + 1}.
$$
\noindent Let us denote by $g_i$ the genus of $\FF_i$ in $T_0/\F_{q^2}$ and recall the following formul\ae:
\begin{equation}\label{genregs}
g_i = \left \{ \begin{array}{ll}
		(q^{\frac{i+1}{2}}-1)^2  & \mbox{for odd } i, \\
		(q^{\frac{i}{2}}-1)(q^{\frac{i+2}{2}}-1) & \mbox{for even } i. 
		\end{array} \right .
\end{equation}

\noindent Thus, according to these formul\ae, it is straightforward that the genus of any step of the tower satisfies:
\begin{equation}\label{bornegenregs}
(q^{\frac{i}{2}}-1)(q^\frac{i+1}{2}-1) < g(\FF_i) < (q^{\frac{i+2}{2}}-1)(q^\frac{i+1}{2}-1).
\end{equation}
Moreover, a tighter upper bound will be useful and can be obtained by expanding expressions in (\ref{genregs}):
\begin{equation}\label{bornesupgenregs}
g(\FF_i) \leq q^{i+1}-2q^\frac{i+1}{2}+1.
\end{equation}

If the characteristic $p=2$ and $r=2$, i.e. ${q = 4}$, then one can densify the Garcia-Stichtenoth's tower with steps defined over the finite field $\F_{q^2}$ 
by considering the following completed tower:
$$
T_1/\F_{16} \ : \qquad \FF_{0,0}\subseteq \FF_{0,1}\subseteq \FF_{0,2} = \FF_{1,0}\subseteq \FF_{1,1} \subseteq \FF_{1,2} = \FF_{2,0} \subseteq \cdots
$$ 
such that ${\FF_i \subseteq \FF_{i,s} \subseteq \FF_{i+1}}$ for any integer ${s \in \{0, 1, 2\}}$, with ${\FF_{i,0}:=\FF_i}$ and ${\FF_{i,2}:=\FF_{i+1}}$. 
Indeed:

\begin{proposition}\label{T3}
There exists a tower $T_1$ defined over $\F_{16}$ whose recursive equation is defined over $\F_2$. More precisely,  the tower $T_1$ is the densified Garcia-Stichtenoth's tower over $\F_{16}$ and is defined by ${T_1 =\big(\FF_{i,s}\big)_{\begin{subarray}{l}\i\geq0\\ s\in{\{0,1\}}\end{subarray}}}$ where for any ${i\geq0}$:
$$
\FF_{i,0} := \FF_i \qquad \mbox{ and } \qquad \FF_{i,1}:= \FF_i(t_{i+1})
$$
with $t_{i+1}$ satisfying the equation:
\begin{equation}\label{eq:intermed}
t_{i+1}^2+t_{i+1} = \frac{x^4_i}{x_i^{3} + 1} \quad \mbox{ for } i=0, \ldots, n-1.
\end{equation}
\end{proposition}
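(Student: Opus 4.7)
The plan is to exploit the factorization of the Artin--Schreier operator $\wp_q(y) = y^q+y$ when $q = 4$ in characteristic $2$. The key identity is
$$
y^4 + y \;=\; (y^2 + y)^2 + (y^2 + y),
$$
valid in any field of characteristic $2$, which says that $\wp_4 = \wp_2 \circ \wp_2$ where $\wp_2(y)=y^2+y$. This immediately suggests factoring the degree-$4$ Artin--Schreier step $\FF_i \subset \FF_{i+1}$ of the Garcia--Stichtenoth tower through an intermediate degree-$2$ Artin--Schreier extension by inserting the variable $t_{i+1} := x_{i+1}^2 + x_{i+1}$.

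First I would set $t_{i+1} := x_{i+1}^2+x_{i+1}$ and apply the identity above to the defining relation $x_{i+1}^4+x_{i+1} = x_i^4/(x_i^{3}+1)$ of the original tower (with $q=4$). This yields directly
$$
t_{i+1}^2 + t_{i+1} \;=\; x_{i+1}^4 + x_{i+1} \;=\; \frac{x_i^4}{x_i^3+1},
$$
so $t_{i+1}$ lies in $\FF_{i+1}$ and satisfies an Artin--Schreier equation over $\FF_i$ whose coefficients (both the polynomial $Y^2+Y$ and the right-hand side $x_i^4/(x_i^3+1)$) belong to $\F_2(x_i)$. This is exactly the claim that the recursive equation is defined over $\F_2$. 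Symmetrically, by construction $x_{i+1}$ satisfies $x_{i+1}^2+x_{i+1}=t_{i+1}$ over $\FF_{i,1}:=\FF_i(t_{i+1})$, which again is an Artin--Schreier equation with coefficients in $\F_2(t_{i+1})$.

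Next I would verify that the inclusion chain
$$
\FF_i \;\subseteq\; \FF_i(t_{i+1}) \;\subseteq\; \FF_{i+1}
$$
is non-degenerate, i.e.\ each of the two subextensions has degree exactly~$2$. Each has degree at most $2$ because it is generated by a root of an Artin--Schreier polynomial $Y^2+Y-a$. Since $[\FF_{i+1}:\FF_i]=4$ in the tower $T_0$ (a standard fact from \cite{gast3}, which the paper already invokes via the genus formul\ae\ in~\eqref{genregs}), multiplicativity of extension degrees forces both subextensions to have degree exactly $2$, and in particular rules out the degenerate case $t_{i+1}\in\FF_i$.

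The essentially complete argument is therefore the substitution together with the multiplicativity of degrees; no hard step remains. The only point that deserves explicit mention is that the tower $T_1$ is well-defined because the constant field $\F_{16}$ is algebraically closed in every $\FF_i$ (so adjoining $t_{i+1}$ does not enlarge the field of constants), and because $x_i^4/(x_i^3+1)$ is a well-defined element of $\FF_i$ (the denominator $x_i^3+1$ is non-zero in $\FF_i$, as it would be a unit in any localization at a place not lying above the zeros of $x_i^3+1$, and in any case is not the zero function). I would cite or briefly recall these facts from~\cite{gast3} and then state that the resulting chain $(\FF_{i,s})$ with $s\in\{0,1\}$ densifies $T_0$ step by step through degree-$2$ Artin--Schreier extensions whose defining equations have coefficients in $\F_2$, which is the content of the proposition.
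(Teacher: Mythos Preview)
Your proposal is correct and follows essentially the same approach as the paper: both introduce $t_{i+1}=x_{i+1}^2+x_{i+1}$ and observe (via the characteristic-$2$ identity $y^4+y=(y^2+y)^2+(y^2+y)$) that it satisfies the Artin--Schreier equation $t_{i+1}^2+t_{i+1}=x_i^4/(x_i^3+1)$ with coefficients in $\F_2$. The paper's proof is in fact terser than yours---it simply writes down the recursive definitions of $x_{i+1}$ and $t_{i+1}$ and the parenthetical remark that $t_{i+1}=x_{i+1}^2+x_{i+1}$---while you additionally spell out the factorization $\wp_4=\wp_2\circ\wp_2$ and verify via multiplicativity of degrees that each intermediate step is a genuine degree-$2$ extension; these extra checks are welcome but do not change the underlying argument.
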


\begin{proof}

Let $x_0$ be a transcendental element over $\F_{2}$ and let us set
$$\FF_0:=\F_{16}(x_0).$$
We define recursively for $i \geq 0$
\begin{enumerate}[(i)]
	\item $x_{i+1}$ such that  $x_{i+1}^4+x_{i+1}=\frac{x^4_i}{x_i^{3} + 1}\  \mbox{ for } i=0, \ldots, n-1$,
 	\item $t_{i+1}$ such that $t_{i+1}^2+t_{i+1} = \frac{x^4_i}{x_i^{3} + 1}\  \mbox{ for } i=0, \ldots, n-1$
(or alternatively $t_{i+1}=x^2_{i+1}+x_{i+1}$).
\end{enumerate}
Thus, we can define recursively the tower $T_1$ by setting:
$$
\FF_{i,1}=\FF_{i,0}(t_{i+1})=\FF_i(t_{i+1}) \qquad \mbox{ and } \qquad \FF_{i+1,0}=\FF_{i+1}=\FF_i(x_{i+1}).
$$  
\end{proof}

Let us remark that it is possible to densify the general Garcia-Stichtenoth's tower over $\F_{q^2}$ for any characteristic $p$ and for any integer $r$ since  each extension $\FF_{i+1}/\FF_i$ is Galois of degree $q=p^r$ with full constant field~$\F_{q^2}$.
However, in the general case the equation (\ref{eq:intermed}) for the intermediate steps is not defined over $\F_p$ but over $\F_q$. 
For example, for ${p=3}$ and ${r=2}$, we obtain an equation which is defined over $\F_9$.

\vspace{.5em}
 
\paragraph{\textbf{Notation.}} In the sequel, we will denote by ${\B_k(\FF/K)}$  the number of places of degree~$k$ of an algebraic function field $\FF/K$ defined over a finite field $K$; we will also denote by $g_{i,s}$ the genus of $\FF_{i,s}/\F_{16}$ in $T_1/\F_{16}$.

\subsection{Descent of the definition field of a Garcia-Stichtenoth's tower on the fields $\F_2$ and $\F_3$} \label{sectiondescent}
First we state that when $q=3$, one can descend the definition field of the tower $T_0/\F_{q^2}$ from $\F_{q^2}$ to $\F_q$ since the recursive equation defining the tower has coefficients lying in $\F_q$. Thus, we have the following result:
\begin{proposition}\label{proptourq3}
If $q=p=3$, there exists a tower $E/\F_q$ defined over $\F_{q}$ given by a sequence:
$$
\GG_{0} \subseteq \GG_{1} \subseteq \GG_{2} \subseteq  \GG_3 \subseteq \cdots 
$$
defined over the constant field $\F_q$ and related to the tower $T_0/\F_{q^2}$ by
$$
\FF_{i} = \F_{q}\GG_{i}\ \  \mbox{ for all }i,
$$
namely $\FF_{i}/\F_{q^2}$ is the constant field extension of $\GG_{i}/\F_q$. 
\end{proposition}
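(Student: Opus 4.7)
The plan is to build $\GG_i$ inside $\FF_i$ by descent: set $\GG_0 := \F_q(x_0)$ and, since $q = p = 3$ forces the coefficients of the defining equation of the tower to lie in $\F_q$, define recursively $\GG_{i+1} := \GG_i(x_{i+1})$ where $x_{i+1}$ is the very element appearing in $T_0$. I will then prove by induction on $i$ the two claims: (a) $\FF_i = \F_{q^2}\GG_i$; and (b) the full constant field of $\GG_i$ is exactly $\F_q$ (so $\GG_i/\F_q$ is a regular function field, linearly disjoint from $\F_{q^2}$ over $\F_q$).

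The base case $i = 0$ is immediate. For the inductive step, the identity $\F_{q^2}\GG_{i+1} = \F_{q^2}\GG_i(x_{i+1}) = \FF_i(x_{i+1}) = \FF_{i+1}$ settles (a) at level $i+1$ at once. To control the degree, I will combine two inequalities: on the one hand $[\GG_{i+1}:\GG_i] \leq q$ because $x_{i+1}$ is a root of the degree $q$ polynomial $T^q + T - u_i \in \GG_i[T]$, where $u_i := x_i^q/(x_i^{q-1}+1) \in \GG_i$; on the other hand, enlarging the constant field can only shrink minimal polynomials, so
$$
[\GG_{i+1}:\GG_i] \;\geq\; [\F_{q^2}\GG_{i+1} : \F_{q^2}\GG_i] \;=\; [\FF_{i+1}:\FF_i] \;=\; q,
$$
the last equality coming from the known behaviour of the Garcia-Stichtenoth tower $T_0$. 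Equality follows.

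The genuine obstacle, and the only place where $q = 3$ (rather than $q = 2$) really plays a role, is ruling out a constant-field jump at step $i+1$. Since $\GG_{i+1} \subseteq \FF_{i+1}$ and $\FF_{i+1}$ has exact constant field $\F_{q^2}$, the full constant field of $\GG_{i+1}$ is either $\F_q$ or $\F_{q^2}$. Were it equal to $\F_{q^2}$, the inductive hypothesis (b) at level $i$ would give $[\F_{q^2}\GG_i : \GG_i] = [\F_{q^2}:\F_q] = 2$ by linear disjointness, and the inclusion $\F_{q^2}\GG_i \subseteq \GG_{i+1}$ would force $2 \mid [\GG_{i+1}:\GG_i] = q = 3$, which is absurd. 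Hence the constant field of $\GG_{i+1}$ is $\F_q$, closing the induction and producing the desired tower $E = (\GG_i)_{i\geq 0}$ over $\F_q$ with $\FF_i = \F_q\GG_i \otimes_{\F_q} \F_{q^2} = \F_{q^2}\GG_i$ for every $i$. The crucial numerical input, absent when $q = 2$, is the coprimality $\gcd(q, 2) = 1$, which makes the descent painless in this odd characteristic setting.
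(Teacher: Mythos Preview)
Your proof is correct and supplies the details the paper omits: the paper treats this proposition as self-evident once one observes that the recursive equation $x_{i+1}^q + x_{i+1} = x_i^q/(x_i^{q-1}+1)$ has coefficients in $\F_q$, and gives no formal argument beyond that sentence.

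Your route differs from the paper's implicit method in how you exclude a constant-field jump. Your divisibility trick --- if the constant field of $\GG_{i+1}$ were $\F_{q^2}$, then $2 = [\F_{q^2}\GG_i:\GG_i]$ would divide $[\GG_{i+1}:\GG_i]=q=3$ --- is elegant but, as you note, is specific to the odd-characteristic situation. The paper's standard mechanism, made explicit in the proof of the analogous descent over $\F_2$ (Proposition~\ref{subfieldp2}), is instead to observe that the place at infinity of $\GG_0$ is totally ramified in every $\GG_i$; a totally ramified degree-one place forces the full constant field to stay $\F_q$ regardless of characteristic. That argument is the more robust one and is what actually makes the $\F_2$ descent go through, so your closing remark that coprimality $\gcd(q,2)=1$ is ``absent when $q=2$'' slightly overstates its importance: it is essential only for \emph{your} particular argument, not for the descent itself.
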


Now, we are interested in the descent of the definition field of the tower $T_1/\F_{q^2}$ 
from $\F_{q^2}$ to  $\F_{p}$ if it is possible. In fact, for the tower $T_1/\F_{q^2}$, one can not establish a general result 
but one can prove that it is possible in the case where the characteristic is $2$ and $r=2$, i.e. $q=4$. 
Note that in order to simplify the presentation, we are going to set the results by using the variable $p$. 

\begin{proposition}\label{proptourp2}
If $p=2$ and $q=p^2$, the descent of the definition field of the tower $T_1/\F_{q^2}$ from $\F_{q^2}$ to  $\F_{p}$ is possible. 
More precisely,  there exists a tower $T_2/\F_p$ given by a sequence:
$$
 \HH_{0,0} \subseteq \HH_{0,1} \subseteq \HH_{0,2} = \HH_{1,0} \subseteq \HH_{1,1}\subseteq \HH_{1,2} = \HH_{2,0} \subseteq \cdots
$$
defined over the constant field $\F_p$ and related to the tower $T_1/\F_{q^2}$ by
$$
\FF_{i,s}=\F_{q^2}\HH_{i,s}\  \  \mbox{ for all $i\geq0$ and $s\in \{0,1,2\}$},
$$
namely $\FF_{i,s}/\F_{q^2}$ is the constant field extension of $\HH_{i,s}/\F_p$.
\end{proposition}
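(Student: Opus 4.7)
The plan is to define a candidate tower $T_2/\F_p$ over $\F_p=\F_2$ by the very same equations that define $T_1/\F_{q^2}$; this is legitimate because, by Proposition~\ref{T3}, those equations already have coefficients in~$\F_p$. Explicitly, set $\HH_{0,0}:=\F_p(x_0)$ and, for each $i\geq 0$,
\[
\HH_{i,1}:=\HH_{i,0}(t_{i+1}),\qquad \HH_{i+1,0}:=\HH_{i,1}(x_{i+1}),
\]
where $t_{i+1}^2+t_{i+1}=\frac{x_i^4}{x_i^3+1}$ and $x_{i+1}^2+x_{i+1}=t_{i+1}$, so that each elementary step is an Artin-Schreier extension in characteristic~$2$. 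Two things must then be verified: (i) that this is genuinely a tower of function fields over $\F_p$ with $\F_p$ as full constant field at every level, and (ii) that extending constants back up to $\F_{q^2}$ recovers $T_1$.

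The crucial point is (i), which I would prove by induction on~$i$, tracking the unique place $Q_{i,s}$ of $\HH_{i,s}$ lying above the pole $P_\infty$ of $x_0$ in $\F_p(x_0)$. A short pole-order computation on the defining recursion shows that $v_{Q_{i,0}}(x_i)=-1$ at every level, so the right-hand side $\frac{x_i^4}{x_i^3+1}$ has a pole of odd order~$1$ at $Q_{i,0}$. By Artin-Schreier ramification theory (\cite[Chapter~III]{stic}), $Q_{i,0}$ is then totally ramified in the degree-$2$ extension $\HH_{i,1}/\HH_{i,0}$, and the same computation, transported one level up, shows total ramification in $\HH_{i+1,0}/\HH_{i,1}$. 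Since a totally ramified extension of a rational place is itself rational, each $\HH_{i,s}$ inherits an $\F_p$-rational place and therefore has full constant field $\F_p$; simultaneously, the degrees $[\HH_{i,1}:\HH_{i,0}]=[\HH_{i+1,0}:\HH_{i,1}]=2$ are certified, so the extension $\HH_{i+1,0}/\HH_{i,0}$ is genuinely of degree~$q=4$ as required.

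Once (i) is established, (ii) is routine. Because the full constant field of $\HH_{i,s}$ is $\F_p$, the tensor product $\HH_{i,s}\otimes_{\F_p}\F_{q^2}$ is a genuine function field over $\F_{q^2}$. It is generated over $\F_{q^2}$ by the elements $x_0,\ldots,x_i$ (together with $t_{i+1}$ when $s=1$) subject to the same defining equations as $\FF_{i,s}$, so comparing extension degrees over $\F_{q^2}(x_0)$ forces $\F_{q^2}\HH_{i,s}=\FF_{i,s}$, as required.

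The main obstacle lies entirely in the constant-field inflation check inside step~(i): without it, the descended field could a priori have full constant field strictly larger than $\F_p$, in which case $\HH_{i,s}\otimes_{\F_p}\F_{q^2}$ would not even be a field and the identification in~(ii) would collapse. The totally-ramified-rational-place device handles this cleanly, essentially porting the ramification computation of \cite{gast3} from the coefficient field $\F_{q^2}$ down to $\F_p$; this transfer is legitimate because the relevant pole orders are formal consequences of the defining equations and do not depend on the coefficient field.
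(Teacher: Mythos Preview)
Your argument is correct and follows the same underlying idea as the paper: because Proposition~\ref{T3} exhibits $T_1$ via equations with coefficients in $\F_2$, one can simply rewrite those equations over $\F_2$ to obtain $T_2$. The paper's own proof of Proposition~\ref{proptourp2} is a one-line citation of Proposition~\ref{T3}; the verification you carry out here---that $\F_p$ is the full constant field of each $\HH_{i,s}$ via the totally ramified place above $P_\infty$---is deferred in the paper to the proof of Proposition~\ref{subfieldp2}, where it appears in essentially the form you give (total ramification of the place at infinity, citing \cite[Prop.~3.7.8]{stic}).
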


\begin{proof}
It is a straightforward consequence of Proposition \ref{T3}.
\end{proof}

In order to draw consequences for the previously descended towers, let us recall the known results concerning the number of places of degree one of the tower $T_0/\F_{q^2}$, established in \cite{gast3} and \cite{alkushst}.

\begin{proposition}\label{genus}
If $q=p^r\geq2$, then for any $n>2$:
$$
\B_1(\FF_n/\F_{q^2}) = \left\{ 
	\begin{array}{ll}
	 q^n(q^2-q)+2q^2 & \mbox{if } p=2,\\
	 q^n(q^2-q)+2q & \mbox{if }  p>2.
\end{array}\right.
$$
\end{proposition}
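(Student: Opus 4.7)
The plan is essentially to invoke the detailed computation carried out in the original constructions \cite{gast3} and \cite{alkushst}, since the statement is phrased as a recollection of results from those sources. If I were to reconstruct the argument from scratch, I would proceed along the following lines.

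First, I would identify the relevant rational places of $\FF_0 = \F_{q^2}(x_0)$: the unique place at infinity, the zeros of $x_0^{q-1}+1$ (at which the right-hand side of the defining equation has poles), and the remaining rational places. Each extension $\FF_{i+1}/\FF_i$ is Galois of degree $q$ with Galois group isomorphic to $(\F_p,+)^r$, defined by an Artin--Schreier type equation $x_{i+1}^q + x_{i+1} = f_i$ where $f_i = x_i^q/(x_i^{q-1}+1)$. Using the standard ramification theory of such extensions (cf.\ \cite{stic}), I would show that each place $P$ of $\FF_i$ falls into exactly one of three regimes: total ramification (when $f_i$ has a pole at $P$ of order prime to $p$, after appropriate Artin--Schreier reduction), complete splitting (when $f_i(P)$ lies in the image of $z\mapsto z^q+z$ on the residue field), or inertness.

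Next, I would perform an induction on $n$. The $q^2-q$ rational places of $\FF_0$ lying over values $\beta\in\F_{q^2}$ with $\beta^q+\beta\neq 0$ and $\beta^{q-1}+1\neq 0$ all split completely at every step, contributing $q^n(q^2-q)$ rational places in $\FF_n$. On top of this, I would separately count the degree-one places sitting over the totally ramified locus, whose number stabilizes as $n$ grows and yields the correction term that depends on the parity of $p$.

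The main technical obstacle is precisely this correction term, and it is also where the dichotomy between $p=2$ and $p>2$ emerges. In odd characteristic, the factorization of $x^{q-1}+1$ over $\F_{q^2}$ and the structure of the ramified fiber produce the contribution $2q$. In characteristic $2$, we have $-1=1$, so $x^{q-1}+1 = (x^{(q-1)/\gcd}+\dots)$ behaves differently and extra rational points appear above the ramified places, inflating the correction from $2q$ to $2q^2$. Since the detailed ramification bookkeeping is lengthy and already completed in \cite{gast3} for the odd-characteristic case and in \cite{alkushst} for the even-characteristic case, the proof in the present paper is simply to cite these references rather than to redo the case analysis.
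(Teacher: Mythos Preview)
Your reading is correct: the paper does not give a proof of this proposition at all. It is stated as a recollection of known results, with the sentence immediately preceding it explicitly attributing the computation to \cite{gast3} and \cite{alkushst}. Your proposal accurately reflects this and gives a reasonable outline of the argument carried out in those references (complete splitting above the $q^2-q$ places $P_\alpha$ with $\alpha^q+\alpha\neq 0$, plus a constant contribution coming from the ramified locus, with the $p=2$ vs.\ $p>2$ dichotomy arising from the structure of the fiber above the places where $x_0^{q-1}+1=0$).
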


Now, we deduce some straightforward properties concerning the towers $T_2/\F_2$ and $E/\F_3$. 

\begin{proposition}\label{subfieldp2}
Let $q=p^2=4$. For any integers ${i\geq0}$ and ${s \in \{0,1,2\}}$, the algebraic function field $\HH_{i,s}/\F_{p}$ in the tower $T_2/\F_p$  has ${\B_1(\HH_{i,s}/\F_p)}$ places of degree one, ${\B_2(\HH_{i,s}/\F_p)}$ places of degree two and ${\B_4(\HH_{i,s}/\F_p)}$ places of degree four and satisfies:
\begin{enumerate}[(i)]
	\item\label{inclus} ${\HH_i/\F_p \subseteq \HH_{i,s} /\F_p \subseteq \HH_{i+1}/\F_p}$ with ${\HH_{i,0}=\HH_i}$ and ${\HH_{i,2}=\HH_{i+1}}$,
	\item\label{bornegenreis} if ${g_{i,s}\stackrel{\mbox{def}}{:=}g(\HH_{i,s}/\F_p)}$ denotes the genus of $\HH_{i,s}/\F_p$, then:\\
	\begin{minipage}{0.35\linewidth}
	\begin{enumerate}[(a)]\label{eq:gis}
	\item[(ii.a)] \quad $g_{i,s} \leq \frac{g_{i+1}}{p^{2-s}}$
	\end{enumerate}
\end{minipage}
\hfill
\begin{minipage}{0.45\linewidth}
	\begin{enumerate}
	 \item[(ii.b)]\label{eq:gisindep} \quad $g_{i,s}\leq p^{s-2}(q^{i+2}-2q^{\frac{i}{2}+1})+p^{s-2}$\
	 \end{enumerate}
\end{minipage}
   
	\item\label{it:nbplaces} ${\B_{1}(\HH_{i,s}/\F_p)+2\B_{2}(\HH_{i,s}/\F_p) +4\B_{4}(\HH_{i,s}/\F_p)\geq q^{i}(q^2-q)p^{s}}$.
\end{enumerate}
Moreover, $\F_p$ is algebraically closed in each algebraic function field $\HH_{i,s}$ of the tower $T_2/\F_p$.
\end{proposition}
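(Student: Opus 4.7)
The plan is to treat the four claims in sequence, relying on the correspondence $\FF_{i,s} = \F_{q^2}\HH_{i,s}$ from Proposition~\ref{proptourp2}, together with the invariance of the genus under constant field extension and the standard decomposition of places under such an extension. Item~(i) and the last assertion that $\F_p$ is algebraically closed in each $\HH_{i,s}$ are direct consequences of Propositions~\ref{T3} and~\ref{proptourp2}: the inclusion chain $\FF_i \subseteq \FF_{i,1} \subseteq \FF_{i+1}$ in $T_1/\F_{q^2}$ descends to the analogous chain in $T_2/\F_p$, and the full constant field of $\HH_{i,s}$ must be $\F_p$, since otherwise the constant field of $\FF_{i,s}$ would strictly contain $\F_{q^2}$, contradicting Proposition~\ref{T3}.

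For item~(ii), I would first observe that the genus is unchanged by constant field extension, so $g_{i,s} = g(\FF_{i,s}/\F_{q^2})$. Inequality~(ii.a) then follows by applying the Riemann--Hurwitz genus formula to the separable extension $\FF_{i+1}/\FF_{i,s}$ of degree $p^{2-s}$; its intermediate steps are Artin--Schreier extensions in characteristic~$2$ whose different degree is large enough to strengthen the naive Hurwitz inequality $g_{i,s}-1 \leq (g_{i+1}-1)/p^{2-s}$ into $g_{i,s} \leq g_{i+1}/p^{2-s}$. In the extremal cases $s\in\{0,2\}$, one may also verify this directly from the explicit formul\ae~\eqref{genregs}. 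Item~(ii.b) then follows by combining (ii.a) with the upper bound $g_{i+1}\leq q^{i+2}-2q^{(i+2)/2}+1$ from~\eqref{bornesupgenregs}.

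For item~(iii), I would use the standard decomposition of places under the constant field extension from $\F_p$ to $\F_{q^2}$ of degree~$4$: every degree-one place of $\FF_{i,s}/\F_{q^2}$ lies above a place of $\HH_{i,s}/\F_p$ of degree $d\in\{1,2,4\}$, with exactly $d$ conjugate places above, which yields the identity
$$
\B_1(\FF_{i,s}/\F_{q^2}) = \B_1(\HH_{i,s}/\F_p) + 2\B_2(\HH_{i,s}/\F_p) + 4\B_4(\HH_{i,s}/\F_p).
$$
It then remains to establish the lower bound $\B_1(\FF_{i,s}/\F_{q^2}) \geq (q^2-q)q^ip^s$. For this, I would invoke the classical splitting-locus argument for the Garcia--Stichtenoth tower: the $q^2-q$ rational places of $\FF_0=\F_{q^2}(x_0)$ belonging to the splitting locus of $T_0$ (cf.~Proposition~\ref{genus}) split completely throughout the densified tower $T_1/\F_{q^2}$, producing $q^i \cdot p^s$ degree-one places of $\FF_{i,s}$ above each. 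The main obstacle is precisely to verify that this complete-splitting property persists through the intermediate Artin--Schreier densifying step governed by~\eqref{eq:intermed}, which requires a local analysis of the ramification behaviour at each of these rational places.
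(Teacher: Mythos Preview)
Your argument for the last assertion—that $\F_p$ is algebraically closed in each $\HH_{i,s}$—has a genuine gap. You claim that if the full constant field of $\HH_{i,s}$ strictly contained $\F_p$, then the full constant field of $\FF_{i,s}=\F_{q^2}\HH_{i,s}$ would strictly contain $\F_{q^2}$. This implication is false: since $[\F_{q^2}:\F_p]=4$, a hypothetical constant field $\F_{p^2}$ (or even $\F_{p^4}$) of $\HH_{i,s}$ would be absorbed into $\F_{q^2}=\F_{p^4}$ under the compositum, so the constant field of $\FF_{i,s}$ would still equal $\F_{q^2}$ and no contradiction arises. The paper argues differently: the place at infinity of $\FF_0$ is totally ramified in $T_0/\F_{q^2}$, hence the place at infinity of $\HH_0$ is totally ramified in every $\HH_{i,s}$, and the existence of a totally ramified place in the extension $\HH_{i,s}/\HH_0$ forces both fields to have the same constant field, namely $\F_p$.

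On item~(iii), the ``local analysis of the ramification behaviour'' you flag as the main obstacle is unnecessary, and the paper avoids it entirely. Once one knows from \cite{gast3} that each $P_\alpha$ with $\alpha^q+\alpha\neq 0$ splits completely in $\FF_{i+1}/\FF_0$, complete splitting in the intermediate field $\FF_{i,s}$ follows from a pure degree count: if $\ell$ is the number of places of $\FF_{i,s}$ above $P_\alpha$, then $\ell\le[\FF_{i,s}:\FF_0]$ always, while the $[\FF_{i+1}:\FF_0]$ places of $\FF_{i+1}$ above $P_\alpha$ are distributed among these $\ell$ places with at most $[\FF_{i+1}:\FF_{i,s}]$ above each, forcing $\ell\ge[\FF_{i,s}:\FF_0]$. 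The remaining parts of your proposal (items~(i), (ii), and the identity relating $\B_1(\FF_{i,s}/\F_{q^2})$ to the place counts of $\HH_{i,s}/\F_p$) match the paper's approach.
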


\Remark Bound (\ref{bornegenreis}.a) is tighter than Bound (\ref{bornegenreis}.b), but when we will need an estimate for $g_{i,s}$ which does not depend on the parity of the step $i$ of the tower, Bound (\ref{bornegenreis}.b) will be useful.

\begin{proof}
Property (\ref{inclus}) follows directly from Proposition \ref{proptourp2}. Each extension $\HH_{i+1}/\HH_{i,s}$ is a Galois extension of degree $[\HH_{i+1}:\HH_{i,s}]=2^{2-s}$. 
Moreover, the full constant field of $\HH_{i,s}$ is $\F_{p}$ since at least one place of $\HH_0$ 
is totally ramified in  $\HH_{i,s}$ by \cite[Prop. 3.7.8]{stic}.
Indeed, the place at infinity of $\FF_0$ is totally ramified in the tower $T_0/\F_{q^2}$. Hence, the same holds for the place at infinity of $\HH_0$ in $T_2/\F_{p}$. Since the algebraic function field $\FF_{i,s}$ is a constant field extension of $\HH_{i,s}$, for any integers ${i\geq0}$ and ${s\in\{0,1,2\}}$, $\FF_{i,s}$ and $\HH_{i,s}$ have the same genus, so by the Hurwitz Genus Formula \cite{stic}, we have: 
\begin{equation}\label{genrehurw}
g_{i,s} \leq \frac{g_{i+1}}{p^{2-s}}
\end{equation}
with $g(\HH_{i+1}/\F_p) = g(\FF_{i+1}/\F_{q^2})=g_{i+1}$ given by (\ref{genregs}). Finally, applying Bound (\ref{bornesupgenregs}) on $g_{i+1}$, we get 
(\ref{eq:gisindep}.b).
Moreover, for ${\alpha\in\F_{q^2}\backslash \{\omega \in \F_{q^2} \, \vert \, \omega^q + \omega = 0\}}$, let $P_{\alpha}$ denote the place of degree one in the rational function field $\FF_0$ which is the zero of $x_0-\alpha$, then $P_{\alpha}$ splits completely in $\FF_{i+1}/\FF_0$ by \cite[Lemma 3.9]{gast3}. Let us set $d:=[\FF_{i+1}:\FF_0]$ and $d':=[\FF_{i+1}:\FF_{i,s}]$. If $\ell$ denotes the number of places of $\FF_{i,s}$ lying over the place $P_{\alpha}$ of $\FF_0$, it is well known that $\ell \leq \frac{d}{d'}$ with equality holding if and only if $P_{\alpha}$ splits completely in  ${\FF_{i+1}/\FF_0}$. 
But, we also have ${d\leq \ell d'}$ which gives ${\ell\geq \frac{d}{d'}}$. It follows that ${\ell=\frac{d}{d'}= [\FF_{i,s}:\FF_0]}$ which proves that 
the place $P_{\alpha}$ splits completely also in $\FF_{i,s}/\FF_0$. Thus, there are exactly $q^{i}p^s$ places of degree one above 
$P_{\alpha}$ in $\FF_{i,s}$, so there are at least $q^{i}(q^2-q)p^s$ places of degree one in $\FF_{i,s}$, since ${\big\vert\,\F_{q^2}\backslash \{\omega \in \F_{q^2} \, \vert \, \omega^q + \omega = 0\} \, \big\vert= q^2-q}$.

To conclude, let us recall from \cite{gast3} that the number of places of degree one of $\FF_{i,s}/\F_{q^2}$ is such that \linebreak[4]${\B_1(\FF_{i,s}/\F_{q^2})\geq (q^2-q)q^{i}p^{s}}$. Thus, $\FF_{i,s}$ being a degree four constant field extension of $\HH_{i,s}$, it is clear that for any
integers $i\geq0$ and ${s\in\{0,1,2\}}$, it holds that
$$
\B_{1}(\HH_{i,s}/\F_p)+2\B_{2}(\HH_{i,s}/\F_p)+4\B_{4}(\HH_{i,s}/\F_p)\geq (q^2-q)q^{i}p^{s}.
$$
\end{proof}

Similar results than those of Proposition \ref{subfieldp2} can be obtained for the tower $E/\F_{3}$, namely:
\begin{proposition}\label{subfieldq3}
Let $q=p=3$. For any integer ${i\geq0}$, the algebraic function field $\GG_{i}/\F_{q}$ in the tower $E/\F_q$ has the same genus ${g_{i}}$ than the corresponding step $\FF_i/\F_{q^2}$ of the tower $T_0/\F_{q^2}$. Moreover, the number of places of degree one and two of each function field $\GG_i/\F_{q}$ is related to the number of rational places of ${\FF_{i}/\F_{q^2}}$ by: 
$$
\B_{1}(\FF_{i}/\F_{q^2}) = \B_1(\GG_i/\F_q)+2\B_{2}(\GG_{i}/\F_q) 
$$
thus, the following bound  holds:
\begin{equation}\label{eq:nbplaces12}
\B_1(\GG_i/\F_q)+2\B_{2}(\GG_{i}/\F_q) \geq q^{i}(q^2-q).
\end{equation}
\end{proposition}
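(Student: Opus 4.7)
The plan is to reduce everything to the classical theory of constant field extensions, since by Proposition \ref{proptourq3} we have $\FF_i = \F_{q^2}\GG_i$, so that $\FF_i/\F_{q^2}$ is a degree-two constant field extension of $\GG_i/\F_q$.

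First, the genus claim follows immediately from the well-known fact that the genus of an algebraic function field is preserved under constant field extensions (see \cite[Theorem 3.6.3]{stic} for instance). Hence $g(\GG_i/\F_q)=g(\FF_i/\F_{q^2})=g_i$ as given by formula~(\ref{genregs}).

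Next, I would analyze the decomposition of places in the constant field extension ${\FF_i/\GG_i}$. The standard result (see \cite[Theorem 3.6.3]{stic}) asserts that a place $P$ of $\GG_i/\F_q$ of degree $d$ decomposes in $\FF_i/\F_{q^2}$ into ${\gcd(d,2)}$ places, each of degree ${d/\gcd(d,2)}$. Specializing:
\begin{itemize}
\item a degree-one place of $\GG_i$ yields exactly one degree-one place of $\FF_i$;
\item a degree-two place of $\GG_i$ yields exactly two degree-one places of $\FF_i$;
\item a place of $\GG_i$ of degree ${d\geq 3}$ yields only places of degree ${\geq 2}$ in $\FF_i$.
\end{itemize}
Since every degree-one place of $\FF_i/\F_{q^2}$ lies above a unique place of $\GG_i/\F_q$, summing over the places of $\GG_i$ gives exactly
$$
\B_{1}(\FF_i/\F_{q^2}) = \B_{1}(\GG_i/\F_q) + 2\,\B_{2}(\GG_i/\F_q),
$$
which is the second assertion.

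Finally, the lower bound~(\ref{eq:nbplaces12}) is obtained by combining the displayed equality with Proposition \ref{genus}: since $q=p=3$ is odd, one has ${\B_1(\FF_i/\F_{q^2})=q^i(q^2-q)+2q\geq q^i(q^2-q)}$ for every ${i>2}$, and the case of small $i$ is easily verified directly. No serious obstacle is expected here; the only point requiring care is to invoke the exact splitting rule for constant field extensions, which determines that the only places of $\GG_i$ contributing to ${\B_1(\FF_i/\F_{q^2})}$ are those of degree one and two, with multiplicities $1$ and $2$ respectively.
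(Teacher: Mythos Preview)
Your proposal is correct and follows essentially the same route the paper takes. The paper does not write out a separate proof of this proposition; it simply states that the argument is analogous to that of Proposition~\ref{subfieldp2}, and your three steps (genus invariance under constant field extension, the standard splitting law for places in a constant field extension, and then the lower bound for $\B_1(\FF_i/\F_{q^2})$) are exactly the ingredients used there.

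One small remark on the last step: you invoke Proposition~\ref{genus}, which only covers ${i>2}$, and then appeal to a direct check for small~$i$. In the proof of Proposition~\ref{subfieldp2} the paper instead obtains the lower bound uniformly for all~$i$ by observing that the $q^2-q$ places $P_\alpha$ of $\FF_0$ (with $\alpha^q+\alpha\neq 0$) split completely in $\FF_i/\FF_0$, yielding $\B_1(\FF_i/\F_{q^2})\geq q^i(q^2-q)$ directly. This avoids any case distinction, so you may prefer to use that argument in place of Proposition~\ref{genus}; either way the conclusion is the same.
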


To conclude this section, let us recall that in \cite{babe1}, the authors established the ordinarity of the classical tower over $\F_{q^2}$:
\begin{theorem}
For any prime power $q$, the tower ${T_0/\F_{q^2}}$  is ordinary.
\end{theorem}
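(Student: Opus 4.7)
The plan is to prove by induction on $n$ that every step $\FF_n$ of the tower $T_0$ is ordinary, which is exactly the definition of the tower being ordinary. The base case $n=0$ is immediate: $\FF_0 = \F_{q^2}(x_0)$ is rational, so $g(\FF_0) = \gamma(\FF_0) = 0$.

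For the inductive step, assume $\FF_i$ is ordinary. The extension $\FF_{i+1}/\FF_i$ is an elementary abelian $p$-extension of degree $q = p^r$, defined by the additive equation $x_{i+1}^q + x_{i+1} = f_i$ with $f_i := x_i^q/(x_i^{q-1}+1) \in \FF_i$. After extending scalars to $\overline{\F_p}$ --- which preserves both genus and $p$-rank by Lemma~\ref{conservationord} --- I would combine the Deuring--Shafarevich formula for Galois $p$-extensions,
\[
\gamma(\FF_{i+1}) - 1 \;=\; q\,\bigl(\gamma(\FF_i) - 1\bigr) + \sum_{Q}(e_Q - 1),
\]
with the Hurwitz genus formula
\[
2g(\FF_{i+1}) - 2 \;=\; q\,\bigl(2g(\FF_i) - 2\bigr) + \sum_{Q} d_Q,
\]
the sums being taken over places $Q$ of $\FF_{i+1}$ ramified over $\FF_i$. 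Subtracting one from the other yields
\[
g(\FF_{i+1}) - \gamma(\FF_{i+1}) \;=\; q\,\bigl(g(\FF_i) - \gamma(\FF_i)\bigr) + \tfrac{1}{2}\sum_{Q}\bigl(d_Q - 2(e_Q - 1)\bigr).
\]
Because $d_Q \geq 2(e_Q - 1)$ at every wildly ramified place, the quantity $g - \gamma$ is monotone under Galois $p$-extensions, and ordinarity propagates from $\FF_i$ to $\FF_{i+1}$ precisely when every ramified place satisfies the equality $d_Q = 2(e_Q - 1)$.

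The technical heart of the proof is therefore to establish this minimality, which reduces to a local analysis at each pole of $f_i$ (the places of $\FF_{i+1}$ ramified over $\FF_i$ all sit above such poles, possibly after normalizing $f_i$ modulo the image of the Artin--Schreier operator $y \mapsto y^q + y$). The base-level check is easy: $f_0 = x_0^q/(x_0^{q-1}+1)$ has only simple poles, namely at $x_0 = \infty$ where $f_0 \sim x_0$, and at the $q - 1$ roots of $x_0^{q-1}+1 = 0$ where the denominator has a simple zero since its derivative $-x_0^{q-2}$ does not vanish there. A simple pole of the Artin--Schreier datum corresponds, at every layer of the degree-$q$ extension, to the minimal Hasse--Arf conductor, which is precisely the situation $d_Q = 2(e_Q - 1)$.

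The main obstacle is propagating this simple-pole structure through the entire tower and handling the wild ramification of $\FF_{i+1}/\FF_i$ in the non-cyclic case $r \geq 2$. Concretely, one must either decompose $\FF_{i+1}/\FF_i$ into a chain of $r$ Artin--Schreier degree-$p$ sub-extensions and verify the minimal conductor $m = 1$ at each layer, or compute the full ramification filtration on $\mathrm{Gal}(\FF_{i+1}/\FF_i) \simeq (\F_p)^r$ directly via Hasse--Arf. The key inductive input is that every ramified place of every $\FF_i$ lies above the place at infinity of $\FF_0$ (totally ramified throughout the tower by \cite{gast3}) or above a root of $x_0^{q-1}+1 = 0$, and the explicit tower equation then forces the relevant pole orders at such places to remain equal to $1$ at each level. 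Once this ramification analysis is carried out --- precisely the content of \cite{babe1} --- one obtains $g(\FF_{i+1}) = \gamma(\FF_{i+1})$ and the induction closes.
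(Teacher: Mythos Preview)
The paper does not actually prove this theorem: it is quoted from Bassa and Beelen \cite{babe1}, with no argument given in the present paper. So there is no ``paper's own proof'' to compare against.

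That said, your outline is essentially the strategy of \cite{babe1}. The combination of Deuring--Shafarevich with Riemann--Hurwitz to reduce ordinarity to the equality $d_Q = 2(e_Q-1)$ at every ramified place is exactly the right mechanism, and your identification of this equality with the vanishing of the second ramification group (equivalently, with simple poles in the Artin--Schreier datum after normalization) is correct.

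Where your proposal remains a sketch rather than a proof is the inductive propagation of the simple-pole condition. Your base-level check for $f_0$ is fine, but the claim that ``the explicit tower equation then forces the relevant pole orders at such places to remain equal to $1$ at each level'' is precisely the nontrivial content: at step $i+1$ one needs to control the pole orders of $f_i = x_i^q/(x_i^{q-1}+1)$ at the places of $\FF_i$ where $x_i$ has a pole or where $x_i^{q-1}+1$ vanishes, and these places themselves sit above ramified places of lower levels. Tracking the valuations of $x_i$ through the tower and verifying that, after suitable Artin--Schreier reduction, the effective pole order is always $1$ is a genuine computation that you have not carried out --- you have correctly flagged it as the work done in \cite{babe1}, but as written your proposal is a roadmap to that paper rather than an independent argument.
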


Thus, we can deduce that the ordinarity of $T_0/\F_{q^2}$ provides the same property to the towers $T_2/\F_2$ and $E/\F_3$:

\begin{proposition}\label{prop_tourordi}
The towers ${T_2/\F_{2}}$ and $E/\F_3$ are ordinary.
\end{proposition}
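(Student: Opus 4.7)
The plan is to deduce the ordinarity of the descended towers from the known ordinarity of $T_0/\F_{q^2}$ (quoted just above the statement), using the two conservation lemmas \ref{conservationord} and \ref{conservationorddesc}.

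I would first handle the tower $E/\F_3$, which is the easier case. By Proposition \ref{proptourq3}, each step $\GG_i/\F_3$ satisfies $\FF_i = \F_9 \GG_i$, where $\FF_i$ is the $i$-th step of $T_0/\F_9$. The theorem above states that $T_0/\F_9$ is ordinary, so $\FF_i$ is ordinary for every $i$. By Lemma \ref{conservationord} applied with $q=3$ and $r=2$, the $p$-rank and the genus are preserved under the constant field descent from $\F_9$ to $\F_3$, so $\GG_i/\F_3$ is ordinary, and hence so is the whole tower $E/\F_3$.

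For $T_2/\F_2$, I would proceed in two stages: first establish the ordinarity of the intermediate tower $T_1/\F_{16}$, and then descend to $\F_2$. For $T_1/\F_{16}$, the steps $\FF_{i,0} = \FF_i$ and $\FF_{i,2} = \FF_{i+1}$ belong to $T_0/\F_{16}$ and are therefore ordinary by the cited theorem. For each intermediate step $\FF_{i,1}$, the construction in Proposition \ref{T3} gives a tower of finite extensions $\FF_{i,1} \subseteq \FF_{i+1}$ of function fields with the same full constant field $\F_{16}$ (as recalled inside the proof of Proposition \ref{subfieldp2}, the place at infinity of $\FF_0$ is totally ramified throughout, so no constant field extension occurs). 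Since $\FF_{i+1}$ is ordinary, Lemma \ref{conservationorddesc} forces $\FF_{i,1}$ to be ordinary as well. Hence every step of $T_1/\F_{16}$ is ordinary.

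To conclude, I would apply Proposition \ref{proptourp2}, which gives $\FF_{i,s} = \F_{16} \HH_{i,s}$ for all $i \geq 0$ and $s \in \{0,1,2\}$, and invoke Lemma \ref{conservationord} once more (with $p=2$, $r=4$) to transfer the ordinarity of each $\FF_{i,s}/\F_{16}$ down to $\HH_{i,s}/\F_2$. This yields the ordinarity of $T_2/\F_2$. No step of this plan looks like a serious obstacle; the only point deserving a moment of care is the justification that $\F_{16}$ is really the full constant field of every $\FF_{i,1}$ so that Lemma \ref{conservationorddesc} applies verbatim, but this is already covered by the total ramification argument recalled in Proposition \ref{subfieldp2}.
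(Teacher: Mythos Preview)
Your proposal is correct and follows essentially the same approach as the paper: use Lemma~\ref{conservationord} to transfer ordinarity of $T_0$ under constant field descent, and use Lemma~\ref{conservationorddesc} on the finite extension $\FF_{i+1,0}/\FF_{i,1}$ (resp.\ $\HH_{i+1,0}/\HH_{i,1}$) to handle the intermediate steps. The only cosmetic difference is that you first establish ordinarity of all of $T_1/\F_{16}$ and then descend, whereas the paper descends the main steps to $\F_2$ first and then invokes Lemma~\ref{conservationorddesc}; both orderings are equally valid.
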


\begin{proof}
Since constant field descent preserves ordinarity from Lemma \ref{conservationord}, the tower $E/\F_3$ is ordinary and so are the steps $\FF_{i,0}$ of the tower $T_2/\F_2$. Moreover Lemma \ref{conservationorddesc} implies that the intermediate steps $\FF_{i,1}$ are also ordinary since each one belongs to a finite extension ${\FF_{i+1,0}/\FF_{i,1}}$ with same constant field, where $\FF_{i+1,0}$ is ordinary.
\end{proof}

\begin{corollary}\label{cor:divnonspe}
For any function field $\FF$ in the towers ${T_2/\F_{2}}$ and $E/\F_3$, there exists a non-special divisor of degree~${g(F)-1}$.
\end{corollary}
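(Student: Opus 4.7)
The plan is to combine the ordinarity of the towers proven in Proposition \ref{prop_tourordi} with the general fact, recalled in the introduction and attributed to \cite{bariro}, that every ordinary function field admits a non-special divisor of degree $g-1$. First I would observe that by Proposition \ref{prop_tourordi} each step $\HH_{i,s}/\F_2$ of $T_2$ and each step $\GG_i/\F_3$ of $E$ is an ordinary function field; this is the only substantive input coming from the preceding sections. Then I would invoke directly the result of \cite{bariro} on each such step to produce the required non-special divisor.

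There is also an alternative route that avoids quoting ordinarity explicitly: since both towers are defined over $\F_2$ or $\F_3$, one can apply Corollary \ref{ordinary} to any step of genus $g>0$ to obtain a zero-dimensional divisor of degree $g-1$. For such a divisor, the Riemann-Roch equality \eqref{RR} specializes to $\dim \D = \i(\D)$, so zero-dimensional and non-special coincide in degree $g-1$. I would mention this second route in passing, but favour the first one because it makes visible why the work on the Hasse-Witt invariant carried out in Section \ref{asympexactsequence} was needed.

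The only bookkeeping detail to handle is the degenerate case $g=0$, which occurs for the rational bottom steps $\HH_{0,0} = \F_2(x_0)$ and $\GG_0 = \F_3(x_0)$ (and is not covered by Corollary \ref{ordinary}). Here one may simply take $\D = -P$ for any rational place $P$: its Riemann-Roch space is $\{0\}$ and its index of speciality vanishes since $\deg(\kappa-\D) = -1 < 0$, so $\D$ is non-special of degree $g-1 = -1$ as required.

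I do not foresee a genuine obstacle. The corollary is, in essence, the concatenation of Proposition \ref{prop_tourordi} with a theorem that has already been quoted in the introduction, the only subtlety being the trivial treatment of the rational bottom step of each tower.
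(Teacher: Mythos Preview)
Your primary route is exactly the paper's argument: the paper's proof reads in full ``It is a straightforward consequence of Corollary~\ref{ordinary} and the last proposition,'' i.e.\ Proposition~\ref{prop_tourordi} supplies ordinarity and Corollary~\ref{ordinary} (from \cite{bariro}) then yields the zero-dimensional divisor of degree $g-1$, which as you note is automatically non-special by Riemann--Roch. Your explicit treatment of the rational bottom steps $g=0$ is a small piece of bookkeeping the paper silently omits.

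One caution about your ``alternative route'': you read Corollary~\ref{ordinary} as applying to \emph{every} function field over $\F_2$ or $\F_3$, which would indeed make the ordinarity of the tower irrelevant. But note that the corollary carries the label \texttt{ordinary}, that the paper's own proof of the present statement still invokes Proposition~\ref{prop_tourordi}, and that the introduction explicitly says ``it was shown in \cite{bariro} that there always exists a non-special divisor of degree $g-1$ in any \emph{ordinary} function field.'' All of this indicates that the printed statement of Corollary~\ref{ordinary} is missing the hypothesis ``ordinary'' and that the alternative shortcut is not actually available. Keep your first route as the proof and drop the second.
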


\begin{proof}
It is a straightforward consequence of Corollary \ref{ordinary} and the last proposition.
\end{proof}


\section{New bounds for the tensor rank}\label{sect:newbound}

\subsection{Preliminary results}
To obtain our new estimates for $\mu_2(n)$ and $\mu_3(n)$ from the tower described in the previous section, we will need some technical results which are proven below.

\begin{theorem}\label{theo_bornesgenes}
Let $n$ and $d$ be two fixed integers. Let $\FF/\Fq$ be an algebraic function field of genus $g$ with at least $B_k$ places of degree $k$ for any ${k \vert d}$. If the three following conditions are satisfied:
\begin{enumerate}[(a)]
	\item\label{hyp_existdegn} ${\B_n(\FF/\Fq) > 0}$ (the inequality ${2g+1\leq q^\frac{n-1}{2}(\sqrt q-1)}$ is a sufficient condition),
	\item\label{hyp_divnonspe} there exists a non-special divisor of degree ${g-1}$,
	\item\label{hyp_nbplaces} $\displaystyle{\sum_{k \vert d}k(B_k+b_k)\geq 2n +2g-1}$, where the integers $b_k$ are chosen such that ${0 \leq b_k \leq B_k}$,
\end{enumerate}
then
$$
\mus_q(n) \leq \sum_{k\vert d} \mus_q(k)(B_k+b_k) +\sum_{k\vert d} \mus_q(k) b_k,
$$
so 
$$
\mus_q(n) \leq \eta \left(\sum_{k \vert d}k(B_k+b_k) + \sum_{k\vert d}k b_k\right) \qquad \mbox{ with } \eta := \max_{k\vert d} \frac{\mus_q(k)}{k}.\\
$$
\end{theorem}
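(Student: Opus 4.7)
The plan is to specialize Theorem~\ref{theo_evalder} to the present data. For the auxiliary divisor, set $\D := R + Q$, where $Q$ is the degree-$n$ place granted by hypothesis~(a) (the inequality $2g+1 \le q^{(n-1)/2}(\sqrt q - 1)$ being the classical Weil/Serre-type sufficient condition for $\B_n(\FF/\Fq) > 0$) and $R$ is the non-special divisor of degree $g-1$ granted by hypothesis~(b). Thus $\deg \D = n + g - 1$ and $\D - Q = R$ is non-special of degree $g-1$. Replacing $\D$ by a linearly equivalent divisor via weak approximation, we may further assume that $\mathrm{supp}\,\D$ is disjoint from $\{Q\} \cup \mathscr P$. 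The set $\mathscr P$ itself consists, for each $k \mid d$, of the $B_k$ available places of degree $k$; among these, we assign multiplicity $u_i = 2$ to $b_k$ of them (evaluation of both $f$ and $f'$) and $u_i = 1$ to the remaining $B_k - b_k$.

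For the surjectivity condition (a) of Theorem~\ref{theo_evalder}, non-specialness of $\D - Q$ gives $\dim \mathscr{L}(\D - Q) = 0$; and since $\i(\D) \le \i(\D - Q) = 0$, Riemann--Roch yields $\dim \Ld{} = \deg \D - g + 1 = n$, so the map $\mathsf{Ev}_Q \colon \Ld{} \to \F_{q^n}$, having trivial kernel and $n$-dimensional source, is an isomorphism and in particular onto. For the injectivity condition (b), the kernel of $\mathsf{Ev}_{\mathscr P}$ on $\Ld{2}$ equals $\mathscr{L}(2\D - E)$ with $E := \sum_i u_i P_i$, whose degree is
$$
\deg E \;=\; \sum_{k \mid d} k\bigl((B_k - b_k) \cdot 1 + b_k \cdot 2\bigr) \;=\; \sum_{k \mid d} k(B_k + b_k).
$$
By hypothesis~(c) this is at least $2n + 2g - 1 > \deg(2\D) = 2n + 2g - 2$, so $\mathscr{L}(2\D - E) = 0$.

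Applying Theorem~\ref{theo_evalder} then yields
$$
\mus_q(n) \;\leq\; \sum_{k\mid d}\Bigl[(B_k - b_k)\,\mus_q(k)\,\widehat{M_{q^k}}(1) \;+\; b_k\, \mus_q(k)\,\widehat{M_{q^k}}(2)\Bigr].
$$
Substituting $\widehat{M_{q^k}}(1) = 1$ and the bound $\widehat{M_{q^k}}(2) \le 3$ recalled before Theorem~\ref{theo_evalder} collapses the right-hand side to $\sum_{k\mid d} \mus_q(k)(B_k + 2b_k)$, which after the regrouping $(B_k+2b_k)=(B_k+b_k)+b_k$ is exactly the first claimed inequality. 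The second bound follows by factoring $\eta := \max_{k\mid d} \mus_q(k)/k$ out of every term, using $\mus_q(k) \le \eta k$.

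The only step that is not pure bookkeeping is producing a single representative of $\D$ that simultaneously satisfies the non-specialness of $\D - Q$ and the support-disjointness from $\{Q\}\cup\mathscr{P}$; this is handled by the standard weak-approximation argument, since both properties depend only on the linear equivalence class of $\D$.
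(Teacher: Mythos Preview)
Your proof is correct and follows essentially the same route as the paper's: both choose $\D$ linearly equivalent to $Q+\mathcal{G}$ (with $\mathcal{G}$ the non-special degree-$(g-1)$ divisor), move its support away from $Q$ and $\mathscr P$ by weak approximation, assign multiplicity~$2$ to $b_k$ of the $B_k$ places of each degree $k\mid d$, and then invoke Theorem~\ref{theo_evalder} together with $\widehat{M_{q^k}}(2)\le 3$. Your explicit verification that $\i(\D)=0$ (hence $\dim\Ld{}=n$) and your closing remark on why the weak-approximation step is harmless are slightly more detailed than the paper's version, but the argument is the same.
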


\begin{proof}
The algorithm recalled in Theorem \ref{theo_evalder} is applied for a set ${\mathscr P =  \cup_{k \vert d} \mathscr{P}_k}$ with ${\mathscr{P}_k \subseteq \P_k(\FF/\F_q)}$ and ${\big \vert \mathscr{P}_k \big \vert = B_k}$. Among each ${P \in \mathscr{P}_k}$, $b_k$ are used with multiplicity ${u=2}$; all such places form a subset $\mathscr{R}$ of~$\mathscr{P}$. The others  ${B_k-b_k}$ places of $\mathscr{P}_k$ are used with multiplicity ${u=1}$. From the existence of a non-special divisor~$\D[G]$ of degree $g-1$ provided by Hypothesis (\ref{hyp_divnonspe}) and the existence of a place $Q$ of degree $n$, one constructs an effective divisor $\D$ such that $\deg \D = n+g-1$ and $\dim \D = n$. Precisely, one can choose any divisor which is equivalent to ${Q+\D[G]}$, but whose support is disjoint from the support  of ${Q+\D[G]}$. Then the following holds:
\begin{itemize}
	\item ${\ker \mathsf{Ev}_{Q} = {\mathcal L}(\D-Q) =\{0\}}$ since ${\D-Q \thicksim \D[G]}$ which is non-special of degree ${g-1}$ and so is zero-dimensional; thus  $\mathsf{Ev}_{Q}$ is bijective by dimension reasons,
	\item  ${\ker \mathsf{Ev}_{\mathscr P} =  {\mathcal L}\Big(2\D-\big(\sum_{P \in \mathscr{P} } P + \sum_{R \in {\mathscr R}}R \big)\Big) = \{0\}}$  from Hypothesis (\ref{hyp_nbplaces}) since $${\deg\left(2\D-(\sum_{P \in \mathscr{P}} P + \sum_{R \in {\mathscr R}}R )\right) = 2\deg \D - \sum_{k \vert d}k(B_k+b_k) < 0}$$ thus ${\mathsf{Ev}_{\mathscr P}}$ is injective.
\end{itemize}
As recalled in Section \ref{sect:intro},  $\widehat{M_q}(2) \leq 3$ so Theorem \ref{theo_evalder} then gives the following bound:
$$
\mus_q(n) \leq \sum_{P \in \mathscr{P} } \mus_q(\deg P) + 2\sum_{R \in {\mathscr R}} \mus_q(\deg R). 
$$
Rearranging summation to group places with the same degree, we get the result. 
\end{proof}

Here we state two special cases of Theorem \ref{theo_bornesgenes} which are adapted to the study of the tensor rank on $\F_2$ and $\F_3$ respectively.\\

This first one is adapted to the case where places of degree one, two and four are taking into account: 

\begin{proposition}\label{spec_chud4} Let $p=2$. If $\FF/\F_2$ is an algebraic function field of genus $g$ with at least $B_k$ places of degree $k$\linebreak[4]\mbox{for ${k=}$1, 2 and 4}, such that the three following conditions are satisfied:
\begin{enumerate}[(a)]
	\item ${\B_n(\FF/\F_2) > 0}$ (the inequality ${2g+1\leq p^\frac{n-1}{2}(\sqrt p-1)}$ is a sufficient condition),
	\item there exists a non-special divisor of degree ${g-1}$,
	\item\label{it:cond4} $\displaystyle{\sum_{k \vert 4}k(B_k+b_k)\geq 2n +2g-1}$, where the integers $b_k$ are chosen such that ${0 \leq b_k \leq B_k}$,
\end{enumerate}
then
$$
\mus_2(n) \leq \frac{9}{2}(n+g+1) + \frac{9}{4}\sum_{k \vert 4} k b_k.
$$
\end{proposition}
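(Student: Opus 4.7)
The plan is to specialize Theorem~\ref{theo_bornesgenes} to $q=2$ and $d=4$. Hypotheses~(a) and~(b) of Proposition~\ref{spec_chud4} are verbatim copies of those of Theorem~\ref{theo_bornesgenes}, so they require no further work. The substantive steps are (i) computing the constant $\eta = \max_{k\mid 4}\mus_2(k)/k$ appearing there, and (ii) promoting the one-sided inequality of hypothesis~(\ref{it:cond4}) to a two-sided control on $\sum_{k\mid 4}k(B_k+b_k)$.

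For step~(i), I would invoke the small-degree values $\mus_2(1)=1$, $\mus_2(2)=3$ (the latter being the Winograd--de~Groote bound $\mu_q(n)\geq 2n-1$, which is sharp when $n\leq q/2+1$, as recalled in Section~\ref{sect:intro}), together with $\mus_2(4)\leq 9$ from~\cite{ceoz}. Dividing by $k$ yields the ratios $1$, $3/2$, $9/4$, so $\eta\leq 9/4$.

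For step~(ii), I would exploit the fact that in Theorem~\ref{theo_bornesgenes}, $B_k$ denotes the cardinality of the subset of places of degree~$k$ actually used by the algorithm, and we are free to discard available places. Keeping the $b_k$'s fixed, I replace each $B_k$ by a value $B_k'\in[b_k,B_k]$ chosen so that
\[
2n+2g-1 \;\leq\; \sum_{k\mid 4}k(B_k'+b_k) \;\leq\; 2n+2g+2.
\]
Starting from the original $B_k$'s, for which the left-hand inequality holds by hypothesis~(\ref{it:cond4}), one iteratively removes a single place as long as the running sum remains at least $2n+2g-1$. Since each removal decreases the sum by at most~$4$ (the largest admissible degree), the process necessarily halts at a value bounded above by $(2n+2g-1)+3=2n+2g+2$. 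This small discrete ``intermediate value'' argument is the only delicate point of the proof.

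Feeding these $B_k'$'s into Theorem~\ref{theo_bornesgenes} then yields
\[
\mus_2(n) \;\leq\; \eta\!\left(\sum_{k\mid 4}k(B_k'+b_k)+\sum_{k\mid 4}kb_k\right) \;\leq\; \tfrac{9}{4}(2n+2g+2)+\tfrac{9}{4}\!\sum_{k\mid 4}kb_k \;=\; \tfrac{9}{2}(n+g+1)+\tfrac{9}{4}\!\sum_{k\mid 4}kb_k,
\]
which is the asserted inequality.
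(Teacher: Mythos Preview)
Your argument follows the paper's proof almost verbatim: specialize Theorem~\ref{theo_bornesgenes} to $q=2$, $d=4$, compute $\eta\leq 9/4$ from $\mus_2(2)=3$ and $\mus_2(4)\leq 9$, and then trim the place set so that $\sum_{k\mid 4}k(B_k+b_k)$ falls in the window $[2n+2g-1,\,2n+2g+2]$. The paper phrases this last step as ``a choice of the $B_k$'s \emph{and the $b_k$'s} such that $\sum_{k\mid 4}k(B_k+b_k)=2n+2g-1+\epsilon$ with $\epsilon\in\{0,1,2,3\}$,'' the worst case $\epsilon=3$ occurring when only degree-$4$ places are available.

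There is, however, a small but genuine gap in your discrete intermediate-value step. By insisting on keeping the $b_k$'s fixed and only lowering $B_k'\in[b_k,B_k]$, your removal process can exhaust all removable places while the running sum is still above $2n+2g+2$: this happens precisely when $2\sum_{k\mid 4}kb_k>2n+2g+2$ (take for instance $B_1=b_1=n+g+2$ and all other $B_k,b_k$ zero). In that situation the sentence ``the process necessarily halts at a value bounded above by $2n+2g+2$'' is unjustified. The paper avoids this by allowing the $b_k$'s to be lowered as well; since $\sum_{k\mid 4}kb_k$ appears with a positive coefficient on the right-hand side of the claimed inequality, replacing each $b_k$ by some $b_k'\leq b_k$ can only strengthen the conclusion, so the fix is immediate and costless.
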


\begin{proof} It is a straightforward consequence of Theorem \ref{theo_bornesgenes}  with ${q=p=2}$ and ${d=4}$.
Recall that $\mus_2(2) = 3$ and $\mus_2(4) \leq 9$; so  ${\eta =  \max_{k\vert 4} \frac{\mus_2(k)}{k} \leq \max  \left \{ 1; \frac{3}{2} ; \frac{9}{4} \right\} = \frac{9}{4}}$ 
 and the result follows from a choice of the $B_k$'s and the $b_k$'s such that ${\sum_{k \vert 4}k(B_k+b_k)= 2n +2g-1+ \epsilon}$, with ${\epsilon \in \{0,1,2,3\}}$: we must consider the less favorable case where there only exists places of degree four and so we have to choose ${\epsilon=3}$.
\end{proof}

This second specialization corresponds to the case where only places of degree one and two are considered:
\begin{proposition}\label{spec_chud2} Let $q=3$. If $\FF/\F_3$ is an algebraic function field of genus $g$ with at least $B_k$ places of degree $k$\linebreak[4]\mbox{for ${k=}$1, 2} such that the three following conditions are satisfied:
\begin{enumerate}[(a)]
	\item ${\B_n(\FF/\F_3) > 0}$ (the inequality ${2g+1\leq q^\frac{n-1}{2}(\sqrt q-1)}$ is a sufficient condition),
	\item there exists a non-special divisor of degree ${g-1}$,
	\item\label{it:cond2} $\displaystyle{\sum_{k \vert 2}k(B_k+b_k)\geq 2n +2g-1}$, where the integers $b_k$ are chosen such that ${0 \leq b_k \leq B_k}$,
\end{enumerate}
then
$$
\mus_3(n) \leq 3(n+g) + \frac{3}{2}\sum_{k \vert 2} k b_k.
$$
\end{proposition}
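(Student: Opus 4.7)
The plan is to invoke Theorem~\ref{theo_bornesgenes} in the specialization $q = 3$, $d = 2$. Its three hypotheses coincide exactly with (a), (b), (c) of the proposition, so the whole argument reduces to identifying the constant $\eta$ and to a careful book-keeping of the parity of $\sum_{k\mid 2} k(B_k + b_k)$.

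For the constant, I would compute $\eta = \max_{k \mid 2}\tfrac{\mus_3(k)}{k}$. Trivially, $\mus_3(1) = 1$. For $k = 2$, Winograd's lower bound $\mu_q(n) \geq 2n-1$ recalled in Section~\ref{sect:intro} gives $\mus_3(2) \geq 3$, while symmetric Lagrange interpolation at the three elements of $\F_3$ realizes a symmetric algorithm of length~$3$, so $\mus_3(2) = 3$. Consequently $\eta = \max\{1,\, 3/2\} = 3/2$. Plugging this into the conclusion of Theorem~\ref{theo_bornesgenes} yields
$$\mus_3(n) \leq \tfrac{3}{2}\Big(\sum_{k \mid 2} k(B_k + b_k) + \sum_{k \mid 2} k b_k\Big).$$

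It then remains to bound the first sum. Choosing the $B_k$ as small as possible (which is allowed since the hypothesis only requires $\FF$ to possess \emph{at least} $B_k$ places of degree $k$), we may arrange that $\sum_{k \mid 2} k(B_k + b_k) = 2n + 2g - 1 + \epsilon$ where $\epsilon \geq 0$ is the smallest integer compatible with the divisibility constraints on $B_1 + b_1 + 2(B_2 + b_2)$. The key observation is that this quantity has the same parity as $B_1 + b_1$: if any degree-one place is available we can take $\epsilon = 0$, whereas in the least favourable situation where only degree-two places enter the algorithm, the sum is necessarily even while $2n + 2g - 1$ is odd, forcing $\epsilon = 1$. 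Substituting $\epsilon = 1$ into the bound gives
$$\mus_3(n) \leq \tfrac{3}{2}\Big(2n + 2g + \sum_{k \mid 2} k b_k\Big) = 3(n+g) + \tfrac{3}{2}\sum_{k \mid 2} k b_k,$$
which is exactly the announced inequality. The proof is essentially a specialization of Theorem~\ref{theo_bornesgenes}; the only genuine step is the parity discussion, which mirrors the analogous analysis in Proposition~\ref{spec_chud4} (where one gets $\epsilon = 3$ because $d = 4$ and thus four possible residues must be accommodated).
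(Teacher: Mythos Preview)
Your proof is correct and follows essentially the same route as the paper: apply Theorem~\ref{theo_bornesgenes} with $q=3$, $d=2$, compute $\eta=\tfrac{3}{2}$ from $\mus_3(1)=1$ and $\mus_3(2)=3$, and handle the worst-case rounding $\epsilon=1$ exactly as in the proof of Proposition~\ref{spec_chud4}. The paper's own proof is a one-line reference to that previous argument, so your write-up simply spells out the details the paper leaves implicit.
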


\begin{proof} The same proof than the previous one with $q=3$ and $d=2$, and so $\eta = \frac{3}{2}$ in Theorem \ref{theo_bornesgenes} gives the result.
\end{proof}

\begin{lemma}\label{lem:Eetage2}
Let $q =4=p^2$ and ${n\geq 19}$. There exists a step $\HH_{i,s}/\F_2$ of the tower $T_2/\F_2$ such that the three conditions of Proposition \ref{spec_chud4} are satisfied with ${b_1=b_2=b_4=0}$. Moreover, if Condition (\ref{it:cond4}) is satisfied then the two others also are.
\end{lemma}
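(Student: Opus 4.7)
The plan is to dispose of condition~(b) by invoking ordinarity, then parametrize the tower so that condition~(c) becomes the only binding constraint, and finally check that (a) follows automatically for the step we select.

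First, condition~(b) is free at every step of $T_2/\F_2$: by Proposition~\ref{prop_tourordi} the tower is ordinary, so each $\HH_{i,s}$ is an ordinary function field, and Corollary~\ref{cor:divnonspe} then supplies a non-special divisor of degree $g_{i,s}-1$ whenever $g_{i,s}\geq 1$. This already takes care of one half of the ``moreover'' statement.

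Next, I would single out the candidate step as the minimal pair $(i^*,s^*)$ in the natural lexicographic order along the tower, such that the lower bound from Proposition~\ref{subfieldp2}(iii) together with the upper bound~(ii.b) is enough to force~(c) with $b_1=b_2=b_4=0$. Concretely, one replaces $B_1+2B_2+4B_4$ by $12\cdot 4^i\cdot 2^s$ and $g_{i,s}$ by the estimate from~(ii.b); condition~(c) then becomes the explicit inequality
\[
12\cdot 4^i\cdot 2^s \;\geq\; 2n + 2\bigl[2^{s-2}(4^{i+2} - 2\cdot 4^{i/2+1}) + 2^{s-2}\bigr] - 1,
\]
whose leading-term form is $4\cdot 4^i\cdot 2^s \gtrsim 2n$. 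In particular this inequality is eventually satisfied, so $(i^*,s^*)$ exists, and the minimality forces $4^{i^*}\cdot 2^{s^*} = O(n)$; substituting back into~(ii.b) gives $g_{i^*,s^*} = O(n)$ as well.

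The remaining task is to check~(a) at $(i^*,s^*)$. The sufficient form $2g+1\leq 2^{(n-1)/2}(\sqrt 2-1)$ has an exponentially growing right-hand side whereas $g_{i^*,s^*}$ grows only linearly in~$n$, so~(a) holds once $n$ exceeds some constant. The main obstacle is to verify that this threshold is no larger than~$19$; I would do this by walking through the first few values of $(i,s)$ and the corresponding intervals of~$n$ they cover. Using the exact genus from~\eqref{genregs} and the bound~(ii.a), the step $(2,0)$ with $g_2=45$ handles $n$ roughly up to~$51$, where the binding check is $2\cdot 45+1 = 91 \leq 2^9(\sqrt 2-1)\approx 212$; the subsequent transitions to $(2,1),(3,0),(3,1),\ldots$ only improve the margin, since between consecutive steps the genus at most doubles while $2^{(n-1)/2}(\sqrt 2-1)$ multiplies by $\sqrt 2$ for each additional unit of~$n$ in the interval handled by the new step. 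This numerical check at the base case $n=19$ is the only delicate point of the argument.
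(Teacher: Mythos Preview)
Your approach is the same as the paper's: dispose of~(b) via ordinarity (Corollary~\ref{cor:divnonspe}), then argue that~(c) is the binding constraint and that~(a) follows at the selected step because the genus grows only like $O(n)$ while the right side of the sufficient criterion for~(a) grows exponentially in~$n$. The paper carries this out by deriving two explicit thresholds --- (a) holds (via bound~(ii.b)) whenever $i \le (n-13)/4$, and the place-count bound forces~(c) whenever $i > \log_4(n)-\tfrac12$ --- and checking that these intervals overlap for $n\ge 21$; the leftover cases $n=19,20$ are settled with the exact genus and place counts of $\HH_1$ and~$\HH_{1,1}$.

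Two points in your sketch deserve tightening. First, ``the genus at most doubles'' between consecutive steps does not follow directly from~(ii.a): that inequality bounds each $g_{i,s}$ relative to $g_{i+1}$, not relative to $g_{i,s-1}$. Combining~(ii.a) with the exact formula~\eqref{genregs} gives a ratio closer to $2.5$ at the early steps; this is harmless because from $(2,0)$ onward the interval of $n$ covered by each step has length far exceeding~$3$, so the factor $\sqrt{2}^{\,\Delta n}$ on the right of~(a) still dominates --- but you should make that comparison explicit. Second, your base check at $(2,0)$ for $n=19$ gives \emph{existence}, but the first step actually satisfying~(c) at $n=19$ is $\HH_1=(1,0)$, not $\HH_2$ (the paper verifies this with exact values). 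Since the sufficient criterion for~(a) is monotone decreasing in the genus, your check at $(2,0)$ automatically yields~(a) at every earlier step as well; say so explicitly, and the ``moreover'' clause is then covered.
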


\begin{proof}  According to Corollary \ref{cor:divnonspe}, Condition (b) is satisfied for any step of the tower.\\
For ${i \leq \frac{n-13}{4}}$, it holds that ${p^{2i+6} \leq p^{\frac{n-1}{2}}}$. Then we get that ${p^{2i+6} \left(1-\frac{1}{p^{i+2}}+\frac{1}{p^{2i+3}}\right) \leq p^\frac{n-1}{2}p^2(\sqrt{p}-1)}$, since \linebreak[4]${1-\frac{1}{p^{i+2}}+\frac{1}{p^{2i+3}} \leq 1 \leq p^2(\sqrt{p}-1)}$. It follows that ${p^{2i+4}-p^{i+2}+p \leq p^\frac{n-1}{2}(\sqrt{p}-1)}$, which leads to\linebreak[4]${2g_{i,s}+1 \leq  p^\frac{n-1}{2}(\sqrt{p}-1)}$ according to Proposition \ref{subfieldp2} (\ref{eq:gisindep}.b) with ${s \in \{0,1\}}$ (one can always assume that ${s\neq 2}$ since ${\HH_{i,2} = \HH_{i+1,0}}$). Hence, Condition (a) is satisfied for any step $\HH_{i,s}$ such that ${i \leq \frac{n-13}{4}}$.
\\ On the other hand, for $i$ such that ${i > \log_q(n)-\frac{1}{2}}$, one has ${q^{i+1-\frac{1}{2}}\geq n+1}$, so ${q^{i+1}p^{s-1}(q-3) \geq n+1}$ since ${p^{s-1} \geq q^{-\frac{1}{2}}= p^{-1}}$, which gives ${q^{i+1}p^s(q-3) \geq 2n+2}$ and so ${q^{i+1}p^s(q-1-2) +q^{\frac{i-1}{2}p^s} \geq 2n+2}$. Thus, it holds that ${q^{i+1}p^s(q-1) \geq 2n+2 +2q^{i+1}p^s-q^\frac{i-1}{2}p^s = 2n +2p^{s-2}(q^{i+2}-q^{\frac{i}{2}+1})+2}$. Eventually, one gets that ${q^{i+1}p^s(q-1) \geq 2n +2p^{s-2}(q^{i+2}-q^{\frac{i}{2}+1})+2p^{s-2}-1}$ since ${2p^{s-2}-1 \leq 2}$ for ${s\in \{0,1\}}$, and Condition (c) is satisfied according to the inequalities (\ref{eq:gisindep}.b) and (\ref{it:nbplaces}) established in Proposition \ref{subfieldp2}. \\
Thus, for ${n \geq 21}$ one can find at least one integer $i$ in the interval ${\left] \log_q(n)-\frac{1}{2} ; \frac{n-13}{4} \right]}$, and so a corresponding step of the tower $\HH_{i,0}$ for which Proposition \ref{spec_chud4} holds. Note that in any case, Condition (a) is satisfied for lower steps than Condition (c), so it may happened that the first suitable step that satisfy both conditions is not $\HH_{i,0}$ itself but one of the previous step.
\\ Moreover one can check that for $n=19$, $\HH_1$ is the first suitable step of the tower to apply Proposition~\ref{spec_chud4} with ${b_1=b_2=b_4=0}$. Indeed, it holds that $g(\HH_1/\F_2) = 9$ so Condition (a) is satisfied and since $\B_1(\HH_1/\F_2) = 4$, $\B_2(\HH_1/\F_2) =2$ and $\B_4(\HH_1/\F_2) =12$, Condition (c) is also satisfied for $\HH_1$ but it is not the case for~$\HH_{0,1}$. Similarly for ${n =20}$,  $\HH_1$ does not satisfy Condition (c), but $\HH_{1,1}$ does satisfy both Conditions~(a) and~(c) since $g(\HH_{1,1}/\F_2) = 21$, $\B_1(\HH_{1,1}/\F_2) = 4$, $\B_2(\HH_{1,1}/\F_2) =2$ and $\B_4(\HH_{1,1}/\F_2) =25$.
\end{proof}

\begin{lemma}\label{lem:Eetage3}
Let $q=3$ and $n\geq 13$. There exists a step $\GG_{i}/\F_3$ of the tower $E/\F_3$ such that the three conditions of Proposition \ref{spec_chud2} are satisfied with ${b_1=b_2=0}$. Moreover, if Condition (\ref{it:cond2}) is satisfied then the two others also~are.
\end{lemma}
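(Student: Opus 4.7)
The plan is to imitate closely the proof of Lemma~\ref{lem:Eetage2}. Condition~(b) is automatic at every step of the tower $E/\F_3$ by Corollary~\ref{cor:divnonspe}, since $E/\F_3$ is ordinary by Proposition~\ref{prop_tourordi}. Hence it suffices to exhibit an integer $i$ that meets both (a) and (c). I would translate (a) into a ceiling condition ``$i \leq f_a(n)$'' and (c) into a floor condition ``$i \geq f_c(n)$'', and then verify that the interval $[f_c(n), f_a(n)]$ contains an integer for every $n \geq 13$.

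For condition~(a), I would start from the uniform bound~(\ref{bornesupgenregs}), namely $g_i \leq 3^{i+1} - 2\cdot 3^{(i+1)/2} + 1$, which gives $2g_i + 1 \leq 2\cdot 3^{i+1} + 3$. Choosing $i \leq \tfrac{n-5}{2}$ forces $3^{i+1} \leq \tfrac{1}{3}\,3^{(n-1)/2}$, so $2g_i + 1 \leq \tfrac{2}{3}\,3^{(n-1)/2} + 3$; since $\sqrt{3} - 1 > \tfrac{2}{3}$, the slack $(\sqrt{3}-1-\tfrac{2}{3})\,3^{(n-1)/2}$ absorbs the constant~$3$ as soon as $n \geq 13$, yielding the sufficient inequality $2g_i + 1 \leq (\sqrt{3}-1)\,3^{(n-1)/2}$ for condition~(a). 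One may thus take $f_a(n) = \lfloor (n-5)/2 \rfloor$.

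For condition~(c), I would combine the lower bound $\B_1(\GG_i/\F_3) + 2\B_2(\GG_i/\F_3) \geq 6\cdot 3^i$ from~(\ref{eq:nbplaces12}) with the same genus bound. The key cancellation $6 \cdot 3^i = 2 \cdot 3^{i+1}$ kills the leading term of $2g_i$, collapsing~(c) to the elementary inequality $4 \cdot 3^{(i+1)/2} \geq 2n + 1$, equivalently $i \geq 2\log_3\!\bigl(\tfrac{2n+1}{4}\bigr) - 1 =: f_c(n)$.

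The main step, and the only one requiring care, is then to check that $f_c(n) \leq f_a(n)$ for every $n \geq 13$, which reduces to the elementary comparison $3^{(n-3)/4} \geq (2n+1)/4$: immediate for large $n$, and verified by direct inspection at the boundary $n = 13$, where the step $\GG_3$ of genus $g_3 = (3^2-1)^2 = 64$ satisfies $\B_1(\GG_3/\F_3) + 2\B_2(\GG_3/\F_3) \geq 6 \cdot 27 = 162 \geq 2 \cdot 13 + 2 \cdot 64 - 1$. The ``moreover'' clause then follows from the same comparison: $f_c$ grows only logarithmically in $n$ while $f_a$ grows linearly, so the smallest step meeting~(c) lies comfortably below the upper bound imposed by~(a), and (c) is always the binding condition. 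As in Lemma~\ref{lem:Eetage2}, I would end by pointing out that since $E/\F_3$ has no intermediate steps, the matching $\GG_i$ produced by the argument is itself the desired step and no fallback to a previous intermediate field is needed.
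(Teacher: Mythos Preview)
Your proposal is correct and follows essentially the same route as the paper: establish (b) everywhere via ordinarity, translate (a) into the ceiling condition $i\leq (n-5)/2$ and (c) into a logarithmic floor condition on $i$ using~(\ref{eq:nbplaces12}) together with the genus bound~(\ref{bornesupgenregs}), and then check that the resulting interval contains an integer for every $n\geq 13$, with (c) being the binding constraint. The only cosmetic differences are that the paper uses~(\ref{bornegenregs}) rather than~(\ref{bornesupgenregs}) when handling (a), and that your reduction of $f_c(n)\leq f_a(n)$ to $3^{(n-3)/4}\geq (2n+1)/4$ silently replaces $f_a(n)=\lfloor (n-5)/2\rfloor$ by the un-floored $(n-5)/2$; for even $n$ the correct exponent is $(n-4)/4$, but the inequality still holds from $n=14$ on, so nothing is lost.
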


\begin{proof} According to Corollary \ref{cor:divnonspe}, Condition (b) is satisfied for any step of the tower.\\
For ${i\leq \frac{n-5}{2}}$, Condition (a) is satisfied since it holds that:
${q^i \leq \frac{q^\frac{n-4}{2}}{\sqrt{q}} \leq \frac{q^\frac{n-4}{2}}{\sqrt{q}+1} = q^\frac{n-4}{2}\frac{\sqrt{q}-1}{2}}$ and so\linebreak[4]${(q^\frac{i+2}{2}-1)(q^\frac{i+1}{2}-1) \leq q^\frac{n-1}{2}\frac{\sqrt{q}-1}{2}}$ which gives that ${2g_i+1 \leq q^\frac{n-1}{2}(\sqrt q-1)}$ according to (\ref{bornegenregs}).\\
On the other hand, when ${i \geq 2\log_q\left(\frac{n}{2}-1\right)}$, Condition (c) is satisfied. Indeed, for such $i$ one has: ${q^\frac{i}{2} \geq \frac{n}{2}-1}$, so ${4q^\frac{i}{2}\geq 2n-5}$, which gives that ${4q^\frac{i}{2}+2q\geq 2n+1}$. Adding ${2q^{i+1}}$, which equals ${(q^2-q)q^{i}}$, to both sides it follows that: ${(q^2-q)q^{i}+2q\geq 2n+2q^{i+1}-4q^\frac{i}{2}+1 = 2n+2(q^{i+1}-2q^\frac{i}{2}+1)-1}$. Thus from (\ref{eq:nbplaces12}) and (\ref{bornesupgenregs}) we get that inequality of Condition (c) holds with ${b_1=b_2=0}$.\\
To conclude, one can see that for $n \geq 13$, the interval ${\left[2\log_q\left(\frac{n}{2} -1\right) ; \frac{n-5}{2}\right]}$ contains at least an integer $i$ and so $\GG_i/\F_3$ is a suitable step of the tower; moreover the smallest such integer is the smallest  ${i \geq 2\log_q\left(\frac{n}{2}-1\right)}$, i.e. the smallest one for which Condition (c) is satisfied. 
\end{proof}

Till the end of this section, we will deal with the following notations: 
$$
\displaystyle{n_{2,i,s} \stackrel{\mbox{def}}{:=} \max \Big\{ m \, \big \vert \,  2m + 2g(\HH_{i,s})-1 \leq \sum_{k \vert 4} k \B_k(\HH_{i,s}/\F_2) \Big\}}
$$
and
$$
\displaystyle{n_{3,i} \stackrel{\mbox{def}}{:=} \max \Big\{ m \, \big \vert \,  2m + 2g(\GG_i)-1 \leq \sum_{k \vert 2} k \B_k(\GG_i/\F_3) \Big\}}.
$$

Let us explain the relevance of these definitions, focusing on the case of the role of $n_{3,i}$ in the tower $E/\F_3$ (the same holds for the tower $T_2/\F_2$ when one replaces $n_{3,i}$ by $n_{2,i,s}$).
The integer $n_{3,i}$ is the biggest one for which it holds that:
$$
\sum_{k \vert 2}k\B_k(\GG_i/\F_3)\geq 2n_{3,i} +2g_i-1
$$
 i.e. $\F_{q^{n_{3,i}}}$ is the biggest extension of $\F_3$ for which $\GG_i/\F_3$ could be a suitable step of the tower to apply Proposition \ref{spec_chud2} with ${b_1=b_2=0}$. If ${n >n_{3,i}}$, then 
 $$
\sum_{k \vert 2}k\B_k(\GG_i/\F_3) < 2n +2g_i-1
$$
but one has
$$
\sum_{k \vert 2}k\B_k(\GG_i/\F_3) + 2(n-n_{3,i})\geq 2n +2g_i-1
$$
which means that $\GG_i$ is still a suitable step of tower to  apply Theorem \ref{spec_chud2} if we can choose the $b_k$'s such that ${\sum_{k \vert 2} k b_k \geq 2(n-n_{3,i})}$.\\

Thus, we are interested in the determination of a lower bound for $n_{3,i}$ and $n_{2,i,s}$: it is the purpose of the two following lemmas:

\begin{lemma} If $p=2$ and $q=p^2=4$, then \ $n_{2,i,s} \geq  q^{i+1}p^s+q^{\frac{i}{2}+1}p^s-1.$
\end{lemma}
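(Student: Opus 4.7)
The plan is to unwind the definition of $n_{2,i,s}$ and control both sides of the defining inequality using the estimates from Proposition \ref{subfieldp2}. Since $n_{2,i,s}$ is, by definition, the largest integer $m$ such that $2m+2g(\HH_{i,s})-1\leq \sum_{k\vert4} k\,\B_k(\HH_{i,s}/\F_2)$, it suffices to show that the stated value $m = q^{i+1}p^s+q^{i/2+1}p^s-1$ still satisfies this inequality.

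For the right-hand side I would invoke Proposition \ref{subfieldp2}~(iii), which gives $\sum_{k\vert4}k\,\B_k(\HH_{i,s}/\F_2)\geq q^i(q^2-q)p^s = (q-1)\,q^{i+1}p^s$, and for the genus I would use Proposition \ref{subfieldp2}~(ii.b), giving $g(\HH_{i,s})\leq p^{s-2}(q^{i+2}-2q^{i/2+1})+p^{s-2}$. The crucial simplification exploits $q=p^2=4$: since $q^k=p^{2k}$, one gets $p^{s-2}q^{i+2}=p^s q^{i+1}$ and $p^{s-2}q^{i/2+1}=p^s q^{i/2}$, and the relation $4q^{i/2}=q\cdot q^{i/2}=q^{i/2+1}$ lets one rewrite the genus bound as $g(\HH_{i,s})\leq p^s q^{i+1}-\tfrac{1}{2}p^s q^{i/2+1}+p^{s-2}$. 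Substituting both estimates, the leading $q^{i+1}p^s$ contributions from the place count ($3p^s q^{i+1}$) and from twice the genus ($2p^s q^{i+1}$) cancel up to a single copy of $p^s q^{i+1}$; this leading-order cancellation is precisely the special feature of the characteristic-$2$ case, since it uses $q-3=1$.

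What remains is a plain arithmetic statement in the quantities $p^s q^{i+1}$, $p^s q^{i/2+1}$ and the residual $p^{s-2}$, and the main obstacle is the bookkeeping of these lower-order terms. Because $p^{s-2}$ is only $1/4$ or $1/2$ for $s\in\{0,1\}$, one has to be careful with the integer rounding built into the $\max$ of the definition of $n_{2,i,s}$, and to check that the $p^s q^{i/2+1}$ coming from $4p^s q^{i/2}$ is large enough to absorb the $-1$ of the claimed bound. Should bound (ii.b) prove too loose in some configurations, the natural fallback is the sharper bound (ii.a), namely $g_{i,s}\leq g_{i+1}/p^{2-s}$, together with the exact genus formula \eqref{genregs} handled separately according to the parity of $i$.
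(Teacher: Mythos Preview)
Your plan matches the paper's proof exactly: both lower-bound $\sum_{k\mid 4} k\,\B_k(\HH_{i,s}/\F_2)$ by $(q^2-q)q^ip^s$ via Proposition~\ref{subfieldp2}\,(\ref{it:nbplaces}) and upper-bound $g_{i,s}$ via~(ii.b) (equivalently (ii.a) combined with~\eqref{bornesupgenregs}), then simplify using $p=2$, $q-p=p$ to reach $\sum_{k\mid 4}k\,\B_k - 2g_{i,s} + 1 \geq q^{i+1}p^s + q^{\frac{i}{2}+1}p^s - 1$. Your proposed fallback to a parity split via~(ii.a) is not needed; the paper dispatches the lower-order bookkeeping in one line with the observation $-p^{s-1}+1 \geq -1$ for $s\in\{0,1,2\}$.
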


\begin{proof} According to Proposition \ref{subfieldp2} (\ref{it:nbplaces}) and (\ref{bornegenreis}.a), and Formula (\ref{bornesupgenregs}), we get:
\begin{eqnarray*}
	 \sum_{k \vert 4} k \B_k(\HH_{i,s}/\F_2) - 2g(\HH_{i,s})+1 & \geq & (q^2-q)q^ip^s-2p^{s-2}(q^{i+2}-2q^{\frac{i}{2}+1}+1) +1 \\
	 	& = & q^{i+2}p^s-q^{i+1}p^s -p^{s-1}(q^{i+2}-2q^{\frac{i}{2}+1}+1)+1\\
		& = & q^{i+2}p^{s-1}(p-1)-q^{i+1}p^s+q^{\frac{i}{2}+1}p^s-p^{s-1}+1\\
		& \geq & q^{i+1}p^{s-1}(q-p)+q^{\frac{i}{2}+1}p^s-1 \quad \mbox{ since } s \in \{0,1,2\} \mbox{ and } p-1 =1\\
		& = & q^{i+1}p^s+q^{\frac{i}{2}+1}p^s-1.
\end{eqnarray*}
\end{proof}

\begin{lemma} If $q=3$, then \ $ n_{3,i} \geq  4q^\frac{i+1}{2}-1$.
\end{lemma}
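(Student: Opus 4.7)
The proof follows the same template as the preceding lemma for the tower $T_2/\F_2$. By the very definition of $n_{3,i}$, it is enough to lower bound the quantity $\sum_{k\vert 2}k\B_k(\GG_i/\F_3) - 2g_i + 1$ by $4q^{(i+1)/2} - 1$, and the claim will follow at once.

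I would begin by invoking the place-counting inequality~(\ref{eq:nbplaces12}) in Proposition~\ref{subfieldq3}, which provides
$$\B_1(\GG_i/\F_3) + 2\B_2(\GG_i/\F_3) \;\geq\; q^i(q^2-q).$$
Combining this with the uniform genus upper bound~(\ref{bornesupgenregs}), namely $g_i \leq q^{i+1}-2q^{(i+1)/2}+1$, I would obtain
$$\sum_{k\vert 2}k\B_k(\GG_i/\F_3) - 2g_i + 1 \;\geq\; q^i(q^2-q) - 2q^{i+1} + 4q^{(i+1)/2} - 1 \;=\; q^{i+1}(q-3) + 4q^{(i+1)/2} - 1.$$
The specialization $q = 3$ annihilates the first summand identically, leaving precisely $4q^{(i+1)/2} - 1$, which by definition of $n_{3,i}$ gives the desired inequality.

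The computation is routine and no genuine obstacle appears; the required ingredients are already packaged in Proposition~\ref{subfieldq3} and in~(\ref{bornesupgenregs}). The conceptual point that makes the resulting estimate of order $q^{(i+1)/2}$ rather than of order $q^{i+1}$ is the exact cancellation $q-3=0$ at $q=3$. This is the direct counterpart of the cancellation exploited in the preceding lemma for $T_2/\F_2$, where the combination of $p-1 = 1$ and $q - p = p$ (valid for $q=p^2=4$, $p=2$) played the analogous role of killing the leading $q^{i+1}p^s$ term. Observe also that, as in the preceding lemma, we have used the uniform genus estimate~(\ref{bornesupgenregs}) rather than the parity-dependent formulas~(\ref{genregs}), so that the resulting bound on $n_{3,i}$ is uniform in the parity of~$i$.
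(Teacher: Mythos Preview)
Your proof is correct and follows essentially the same approach as the paper: both invoke the place-count inequality~(\ref{eq:nbplaces12}) from Proposition~\ref{subfieldq3} and the genus bound~(\ref{bornesupgenregs}), then simplify $q^i(q^2-q)-2(q^{i+1}-2q^{(i+1)/2}+1)+1 = q^{i+1}(q-3)+4q^{(i+1)/2}-1$ and use $q=3$ to kill the leading term. Your added commentary on the cancellation and the parallel with the $\F_2$ case is accurate but not in the paper's own write-up.
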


\begin{proof}
Proposition \ref{subfieldq3} and Formula~(\ref{bornesupgenregs}) give:
\begin{eqnarray*}
	 \sum_{k \vert 2} k \B_k(\GG_i/\F_3) - 2g(\GG_i)+1 & \geq & q^i(q^2-q)-2(q^{i+1}-2q^{\frac{i+1}{2}}+1)+1 \\
	 	& \geq & q^{i+1}(q-1)-2q^{i+1}+4q^\frac{i+1}{2} -1\\
		& = & q^{i+1}(q-3)+4q^\frac{i+1}{2}-1 = 4q^\frac{i+1}{2}-1.
\end{eqnarray*}
\end{proof}

Now, we establish a lower bound for the gap between the genus of two successive steps of each tower $T_2/\F_2$ and $E/\F_3$:

\begin{lemma}\label{lemme_deltag}
\begin{enumerate}[(i)]
	\item If ${p=2}$ and $q =p^2$, then
${
\Delta g_{i,s} \stackrel{\mbox{def}}{:=} g(\HH_{i,s+1})- g(\HH_{i,s}) \geq p^s(2q^i-3q^\frac{i}{2}).
}$\\
	
	\item If $p=q=3$ then 
${
\Delta g_{i} \stackrel{\mbox{def}}{:=} g(\GG_{i+1})- g(\GG_{i}) \geq (q-1)(q^{i+1}-q^{\lceil i/2\rceil}).
}$\\
\end {enumerate}
\end{lemma}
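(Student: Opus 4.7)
The plan is to handle the two parts by different arguments. Part~(ii) will follow directly from the explicit genus formul\ae{} \eqref{genregs} for the Garcia-Stichtenoth tower $T_0/\F_{q^2}$, using the fact that constant-field descent preserves the genus so that $g(\GG_i) = g_i$. Part~(i) will combine the Hurwitz genus formula applied to the degree-$2$ extensions $\FF_{i,s+1}/\FF_{i,s}$ (to obtain a lower bound on the genus of the larger field) with the upper bound (ii.a) of Proposition~\ref{subfieldp2} (to control $g_{i,1}$ from above), since no closed-form expression for $g_{i,1}$ is available.

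For (ii), I split on the parity of $i$. When $i$ is even, $\lceil i/2\rceil = i/2$, and factoring gives
\[
\Delta g_i = (q^{(i+2)/2}-1)^2 - (q^{i/2}-1)(q^{(i+2)/2}-1) = (q^{(i+2)/2}-1)\,q^{i/2}(q-1) = (q-1)(q^{i+1}-q^{i/2}).
\]
When $i$ is odd, $\lceil i/2\rceil = (i+1)/2$, and a similar factoring of $g_{i+1}-g_i$ yields
\[
\Delta g_i = (q^{(i+1)/2}-1)\bigl(q^{(i+3)/2}-q^{(i+1)/2}\bigr) = (q-1)\bigl(q^{i+1}-q^{(i+1)/2}\bigr).
\]
In both cases the claimed inequality holds, in fact with equality.

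For (i), constant-field descent gives $g(\HH_{i,s})=g_{i,s}$, and each extension $\FF_{i,s+1}/\FF_{i,s}$ has degree~$2$. I treat $s=0$ and $s=1$ separately. When $s=0$, the Hurwitz genus formula together with nonnegativity of the different yields $g_{i,1}\geq 2g_{i,0}-1$, whence $\Delta g_{i,0}\geq g_i-1$; expanding $g_i$ via~\eqref{genregs} with $q=4$ and checking both parities of $i$ gives $g_i-1\geq 2q^i-3q^{i/2}=p^0(2q^i-3q^{i/2})$. When $s=1$, Proposition~\ref{subfieldp2}\,(ii.a) gives $g_{i,1}\leq g_{i+1}/2$, hence $\Delta g_{i,1}\geq g_{i+1}/2$; expanding $g_{i+1}$ in both parities with $q=4$ shows $g_{i+1}/2\geq 4q^i-6q^{i/2}=p^1(2q^i-3q^{i/2})$.

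The only subtle point is the $s=0$ case of~(i): a direct formula for $g_{i,1}$ is not available, so the proof of~(ii) cannot be replayed. The trick is to use Hurwitz to bound $g_{i,1}$ \emph{from below} (for $\Delta g_{i,0}$) and Proposition~\ref{subfieldp2}\,(ii.a) to bound $g_{i,1}$ \emph{from above} (for $\Delta g_{i,1}$); together these ensure that each of the two degree-$2$ stages between $\FF_i$ and $\FF_{i+1}$ contributes its proper share of the total gap $g_{i+1}-g_i$. All remaining verifications are elementary arithmetic in $q=4$.
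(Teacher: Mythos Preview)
Your proof is correct, and for part~(ii) and for part~(i) with $s=0$ it coincides exactly with the paper's argument (explicit genus formul\ae{} for~(ii); Hurwitz giving $\Delta g_{i,0}\geq g_i-1$ for~$s=0$).

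For part~(i) with $s=1$ there is a small but genuine difference. The paper applies Hurwitz \emph{again} to the top extension, obtaining $\Delta g_{i,1}\geq (p-1)(g_{i,1}-1)$, and then bounds $g_{i,1}-1$ \emph{from below} via Hurwitz once more: $g_{i,1}-1\geq p(g_i-1)$, so that $\Delta g_{i,1}\geq p(g_i-1)\geq p(2q^i-3q^{i/2})$. Thus the paper reduces both cases uniformly to the single quantity $g_i-1$. You instead bound $g_{i,1}$ \emph{from above} via Proposition~\ref{subfieldp2}\,(ii.a), giving $\Delta g_{i,1}\geq g_{i+1}/2$, and then verify directly that $g_{i+1}/2\geq p(2q^i-3q^{i/2})$. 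Both routes rely only on Hurwitz and the explicit genus formul\ae, and both succeed; the paper's version is slightly more streamlined in that it recycles the same lower bound for $g_i-1$ already established in the $s=0$ case, whereas your version requires a separate (though easy) check on $g_{i+1}$ in both parities.
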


\begin{proof}
\begin{enumerate}[(i)]
	\item For any $s\in \{0,1\}$,  since ${[\HH_{i,s+1}: \HH_{i,s}] = p}$ the Hurwitz Genus Formula gives that \linebreak[4]${g_{i,s+1} - 1 \geq p(g_{i,s}-1)}$ and it follows that ${g_{i,s+1} - g_{i,s} \geq (p-1)(g_{i,s}-1)}$.\\
	If ${s=0}$, then ${g_{i,s}-1 = g_i -1}$ and according to  (\ref{bornegenregs}), it holds that  ${g_i -1 \geq (q^{\frac{i}{2}}-1)(q^\frac{i+1}{2}-1)}$. Thus, we get ${g_i \geq \sqrt{q}q^i -(1+\sqrt{q})q^\frac{i}{2} = 2q^i-3q^\frac{i}{2}}$, which gives that ${g_{i,s+1} - g_{i,s} \geq 2q^i-3q^\frac{i}{2} = p^s(2q^i-3q^\frac{i}{2})}$.\\
	If ${s=1}$, then ${g_{i,s+1} - g_{i,s} \geq (p-1)(g_{i,s}-1)}$ holds, with  ${g_{i,s}-1 = g_{i,1} -1\geq p(g_{i} -1)}$ from Hurwitz Genus Formula. Thus we get 
	${g_{i,s+1} - g_{i,s} \geq (p-1)p(g_i-1) \geq (p-1)p (2q^i-3q^\frac{i}{2})=p^s(2q^i-3q^\frac{i}{2})}$.
		\item From Formul\ae~(\ref{genregs}), we get 
	$$
	g_i = \left \{ \begin{array}{ll}
		(q-1)\left(q^{i+1}-q^\frac{i}{2} \right)  & \mbox{for even } i, \\
		(q-1)\left(q^{i+1}-q^\frac{i+1}{2} \right)  & \mbox{for odd } i, 
		\end{array} \right .
	$$
	which gives the result.
\end{enumerate}
\end{proof}

\subsection{Main results}

\begin{theorem}\label{newunifbounds} It holds that
$$\mus_2(n) \leq \underbrace{\frac{1035}{68}}_{\simeq 15.22}n + \frac{9}{2} \qquad and \qquad \mus_3(n) \leq \underbrace{\frac{1933}{250}}_{= 7.732}n.
$$
\end{theorem}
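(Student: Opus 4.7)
The plan is to apply the specialized Chudnovsky-Chudnovsky bounds of Propositions~\ref{spec_chud4} and~\ref{spec_chud2} to well-chosen steps of the towers $T_2/\F_2$ and $E/\F_3$. For each extension degree $n$, I would identify the appropriate tower step and an associated vector of multiplicities $(b_k)$ so as to minimize the resulting bound on $\mus_q(n)$.

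For $\mus_2$, I would first handle the small values $n \leq 18$ using the tabulated bounds in~\cite{ceoz,bapi}. For $n \geq 19$, I would pick the smallest step $\HH_{i,s}$ in $T_2/\F_2$ for which Condition~(c) of Proposition~\ref{spec_chud4} is satisfied with $b_1=b_2=b_4=0$, that is, with $n \leq n_{2,i,s}$; the existence of such a step is granted by Lemma~\ref{lem:Eetage2}, and Conditions~(a), (b) then follow automatically from Corollary~\ref{cor:divnonspe} and that same lemma. Applying the proposition yields $\mus_2(n) \leq \frac{9}{2}(n + g_{i,s} + 1)$. The worst-case ratio $g_{i,s}/n$ is reached when $n$ lies just above the previous threshold $n_{2,i,s-1}$; bounding $g_{i,s}$ from above by $p^{s-2}(q^{i+2} - 2q^{i/2+1}) + p^{s-2}$ and $n$ from below by $n_{2,i,s-1}+1 \geq q^{i+1}p^{s-1} + q^{i/2+1}p^{s-1}$, the ratio $(n+g_{i,s}+1)/n$ is bounded uniformly, leading (after checking all transitions and small-$n$ corner cases) to the explicit constant $\frac{1035}{68}$.

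For $\mus_3$, the argument is analogous but applied to the non-densified tower $E/\F_3$. A crucial difference is that, in this tower, $n_{3,i}$ is only of order $q^{(i+1)/2}$ whereas $g_i$ is of order $q^{i+1}$, so the choice $b_k=0$ is generally insufficient and one must allow $b_1,b_2 > 0$ in order to stretch the range of $n$ that a given step can cover. For a given $n$, I would pick the smallest step $\GG_i$ that can accommodate $n$ with $b_k \leq \B_k(\GG_i/\F_3)$, and apply Proposition~\ref{spec_chud2} with $\sum_{k|2} k b_k$ as small as possible, namely of order $2(n-n_{3,i})$. This produces a bound of the form $6n + 3(g_i - n_{3,i}) + O(1)$; optimizing over $i$ using the estimates $g_i \leq q^{i+1} - 2q^{(i+1)/2} + 1$ and $n_{3,i} \geq 4q^{(i+1)/2} - 1$ then produces the constant $\frac{1933}{250}$.

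The main obstacle will be the careful case analysis at the transitions between consecutive tower steps, where the worst-case ratio is attained: the genus gaps $\Delta g_{i,s}$ and $\Delta g_i$ from Lemma~\ref{lemme_deltag} must be balanced against the number of places available, the integer round-offs in the $b_k$'s must be accounted for, and the parity distinction between even and odd $i$ in the genus formul\ae\ \eqref{genregs} must be handled. These finite-$n$ effects are what determine the exact coefficients $\frac{1035}{68}$ and $\frac{1933}{250}$ rather than the slightly smaller asymptotic ratios one gets by ignoring lower-order terms.
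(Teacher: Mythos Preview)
Your sketch misses the central balancing idea that produces the specific constants. In the paper, for each $n$ one applies the relevant proposition in \emph{two} ways and takes the minimum: (a) on the first step $\HH_{i,s+1}$ (resp.\ $\GG_{i+1}$) that works with $b_k=0$, giving a bound with slope $\tfrac{9}{2}$ (resp.\ $3$) in $n$; and (b) on the preceding step $\HH_{i,s}$ (resp.\ $\GG_i$) with $\sum_k kb_k = 2(n-n_{2,i,s})$ (resp.\ $2(n-n_{3,i})$), giving a bound with slope $9$ (resp.\ $6$). The two bounds cross at $X = n_{q,i,\cdot} + D_{q,i,\cdot}$, where $D$ is essentially the genus gap from Lemma~\ref{lemme_deltag}; the worst ratio is attained there, and the constants $\tfrac{1035}{68}=\tfrac{9}{2}(1+\tfrac{81}{34})$ and $\tfrac{1933}{250}$ come from bounding $g_{i,s+1}/X$ and $g_{i+1}/X$.

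Your $\mus_3$ argument uses only bound (b), and this cannot give $7.732$. With the smallest $i$ for which $\GG_i$ accommodates $n$ under $b_k\le B_k$, the constraint $2(n-n_{3,i})\le \sum_{k|2}kB_k(\GG_i)\approx 2\cdot 3^{i+1}$ forces $3^{i+1}\approx n$, hence $g_i\approx n$ and $n_{3,i}=O(\sqrt n)$, so your bound $6n+3(g_i-n_{3,i})$ is asymptotically $9n$. The drop to $7.732$ comes precisely from switching to bound (a) on $\GG_{i+1}$ once $n-n_{3,i}$ exceeds $D_{3,i}\approx \Delta g_i$; this pushes the worst-case abscissa $X$ up to order $q^{i+1}$, against which $g_{i+1}\approx q^{i+2}$ gives the ratio $\sqrt 3$ driving the constant. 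For $\mus_2$ your bound-(a)-only approach is less catastrophic because the densified tower keeps $g_{i,s+1}/n_{2,i,s}$ bounded, but it still does not produce the number $\tfrac{1035}{68}$: that value arises from evaluating at $X=n_{2,i,s}+D_{2,i,s}$ with $D_{2,i,s}=p^s(2q^i-3q^{i/2})$, not at $n_{2,i,s-1}+1$ as you propose.
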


\begin{proof}
We first set $p=2$ and $q=p^2$. Note that for ${n \leq 18}$, the result already holds from Section~\ref{sect:intro} and~\cite[Table 1]{ceoz}. So, fix $n\geq 19$ and choose $i\geq0$ and ${s\in \{0,1\}}$ such that
$$
 \sum_{k \vert 4} k \B_k(\HH_{i,s+1}/\F_p) \geq 2n +2g_{i,s+1}-1 
$$
but 
$$
 \sum_{k \vert 4} k \B_k(\HH_{i,s}/\F_p) < 2n +2g_{i,s}-1.
$$
We can apply Proposition \ref{spec_chud4} in the two following ways:\\
\begin{enumerate}[(a)]
	\item on $\HH_{i,s+1}/\F_p$ with ${b_1=b_2=b_4=0}$, which gives:
	$$
	\mus_2(n) \leq \frac{9}{2}\left(n+ g_{i,s+1}+1\right)
	$$
	\item on $\HH_{i,s}/\F_p$ with the $b_k$'s chosen such that $\sum_{k \vert 4} k b_k := 2(n-n_{2,i,s})$ \ \textbf{if}  \ $2(n-n_{2,i,s}) \leq \sum_{k \vert 4} k \B_k(\HH_{i,s}/\F_p)$, which leads to:
	$$
	\mus_2(n) \leq \frac{9}{2}\left(n+ g_{i,s}+1\right) +  \frac{9}{4}\sum_{k \vert 4} k b_k.
	$$
\end{enumerate}
Rewriting those two bounds respectively as:
$$
\mus_2(n) \leq \frac{9}{2}(n-n_{2,i,s})+\frac{9}{2}(n_{2,i,s}+g_{i,s}+1)+ \frac{9}{2}\Delta g_{i,s}
$$
and 
$$
\mus_2(n) \leq 9(n-n_{2,i,s})+\frac{9}{2}(n_{2,i,s}+g_{i,s}+1)
$$
we see that the second one is better than the other as soon as ${n-n_{2,i,s} < \Delta g_{i,s}}$, under the assumption that ${2(n-n_{2,i,s}) \leq \sum_{k \vert 4} k \B_k(\HH_{i,s}/\F_p)}$. So if $D_{2,i,s}$ is such that ${D_{2,i,s} \leq \Delta g_{i,s}}$ and ${2D_{2,i,s} \leq \sum_{k \vert 4} k \B_k(\HH_{i,s}/\F_p)}$, then when ${n-n_{2,i,s} < D_{2,i,s}}$ , the second bound is better and can be reached since we can choose the $b_k$'s such that ${\sum_{k \vert 4} k b_k := 2(n-n_{2,i,s})}$. The particular case where ${n= n_{2,i,s}+D_{2,i,s}}$ will give us an upper bound for $\mus_2(n)$ as follows: define the function ${\Phi_2(x) := \min_{i,s}  \Phi_{2,i,s}(x) }$, with
$$
\Phi_{2,i,s}(x) = \left\{\begin{array}{ll}
				 9(x-n_{2,i,s})+\frac{9}{2}(n_{2,i,s}+g_{i,s}+1) &  \mbox{if } x- n_{2,i,s} < D_{2,i,s} \\
							 & \\
				\frac{9}{2}\left(x+ g_{i,s+1}+1\right) & \mbox{else},
			\end{array} \right.
$$
then $\mus_2(n)$ is bounded above by any linear function whose graph lies above all the points \linebreak[4]${\big\{\big(n_{2,i,s}+D_{2,i,s}, \Phi_p(n_{2,i,s}+D_{2,i,s})\big)\big\}_{i,s}}$.\\
We fix ${X := n_{2,i,s} + D_{2,i,s}}$ where
$$
D_{2,i,s} := \min \big \{ p^s(2q^i-3q^{\frac{i}{2}}) \, ; \, \frac{1}{2}q^i(q^2-q)p^s \big\} = p^s(2q^i-3q^\frac{i}{2})
$$
so that one has $D_{2,i,s} \leq \Delta g_{i,s}$ from Lemma \ref{lemme_deltag}, and ${D_{2,i,s} \leq \frac{1}{2} \sum_{k\vert 4} k\B_k(\HH_{i,s}/\F_p)}$ according to Theorem~\ref{subfieldp2}.
Thus, for any $i,s$, $\displaystyle{\Phi_2(X) \leq \frac{9}{2}\left(1+\frac{g_{i,s+1}}{X}\right)X+\frac{9}{2}}$.\\
One has
\begin{eqnarray*}
\frac{g_{i,s+1}}{X} &  \leq & \frac{p^{s}(q^{i+2}-3q^{\frac{i}{2}+1})+p^{s-1}}{ q^{i+1}p^s+q^{\frac{i}{2}+1}p^s-1 +p^s(2q^i-3q^\frac{i}{2})}\\ 
	& = &  \frac{q^{i+1}p^s(q-3q^\frac{i}{2}+q^{-i-1}p^{-1})}{q^{i+1}p^s(1+2q^{-1}+q^{-\frac{i}{2}}-3q^{-\frac{i}{2}-1}-q^{-i-1}p^{-s})}\\
	& = &  \frac{q-3p^i+q^{-i-1}p^{-1}}{1+2q^{-1}+p^{-i}-\underbrace{(3q^{-\frac{i}{2}-1}-q^{-i-1}p^{-s})}_{\leq 7/16}}\\
	& \leq &  \frac{q-3p^i+q^{-i-1}p^{-1}}{1+2q^{-1}+p^{-i}-\frac{7}{16}}\\
\end{eqnarray*}
which gives that \ $\displaystyle{\frac{g_{i,s+1}}{X} \leq \frac{81}{34}}$, so
$$
\mus_2(n) \leq \frac{9}{2}\left(1+\frac{81}{34}\right)n + \frac{9}{2} = \frac{1035}{68}n + \frac{9}{2}.\\
$$

Now we consider the case $q=p=3$. Since the result already holds  for $n<13$ from \cite[Table 1]{ceoz}, fix $n\geq 13$, 
 and choose $i\geq0$ such that
$$
 \sum_{k \vert 2} k \B_k(\GG_{i+1}/\F_q) \geq 2n +2g_{i+1}-1 
$$
but 
$$
 \sum_{k \vert 2} k \B_k(\GG_{i}/\F_q) < 2n +2g_{i}-1.
$$
We can apply Proposition \ref{spec_chud2} in the two following ways:
\begin{enumerate}[(a)]
	\item on $\GG_{i+1}/\F_q$ with ${b_1=b_2=0}$, which gives:
	$$
	\mus_3(n) \leq 3\left(n+ g_{i+1}\right)
	$$
	\item on $\GG_{i}/\F_q$ with the $b_k$'s chosen such that $\sum_{k \vert 2} k b_k := 2(n-n_{3,i})$ \ \textbf{if}  \ $2(n-n_{3,i}) \leq \sum_{k \vert 2} k \B_k(\GG_{i}/\F_q)$, which leads to:
	$$
	\mus_3(n) \leq 3(n+ g_{i})+ \frac{3}{2}\sum_{k \vert 2} k b_k.
	$$
\end{enumerate}
Rewriting those two bounds respectively as:
$$
\mus_3(n) \leq 3(n-n_{3,i})+3(n_{3,i}+g_{i})+ 3\Delta g_{i}
$$
and 
$$
\mus_3(n) \leq 6(n-n_{3,i})+3(n_{3,i}+g_{i})
$$
we see that the second one is better than the other when ${ n-n_{3,i}<\Delta g_{i}}$, under the assumption that \linebreak[4]${2(n-n_{3,i}) \leq \sum_{k \vert 2} k \B_k(\GG_{i}/\F_q)}$. So if $D_{3,i}$ is such that ${D_{3,i} \leq \Delta g_{i}}$ and ${2D_{3,i} \leq \sum_{k \vert 2} k \B_k(\GG_{i}/\F_q)}$, then when ${2(n-n_{3,i}) < D_{3,i}}$, the second bound is better and can be reached since we can choose the $b_k$'s such that ${\sum_{k \vert 2} k b_k := 2(n-n_{3,i})}$. The particular case where ${n= n_{3,i}+D_{3,i}}$ will give us an upper bound for $\mus_3(n)$ as follows: define the function ${\Phi_3(x) := \min_{i}  \Phi_{3,i}(x) }$, with
$$
\Phi_{3,i}(x) = \left\{\begin{array}{ll}
				 6(x-n_{3,i})+3(n_{3,i}+g_{i}) & \mbox{if } x- n_{3,i} < D_{3,i}\\
				 & \\
				 3\left(x+ g_{i+1}\right) & \mbox{else},
			      \end{array} \right.
$$
then $\mus_3(n)$ is bounded above by any linear function whose graph lies above all the points \linebreak[4]${\big\{\big(n_{3,i}+D_{3,i}, \Phi_3(n_{3,i}+D_{3,i})\big)\big\}_{i}}$.\\
We fix ${X := n_{3,i} +D_{3,i}}$ where
$$
D_{3,i} := \min \big \{ (q-1)(q^{i+1}-q^{\lceil i/2\rceil}) \, ; \, \frac{1}{2}q^i(q^2-q) \big\}.
$$
Thus, for any $i\geq2$, $D_{3,i}=(q-1)(q^{i+1}-q^{\lceil i/2\rceil})$; and it holds that $\displaystyle{\Phi_3(X) \leq 3\left(1+\frac{g_{i+1}}{X}\right)X}$.\\
One has:
\begin{eqnarray*}
\frac{g_{i+1}}{X} &  \leq & \frac{(q^{\frac{i+3}{2}}-1)(q^{\frac{i+2}{2}}-1)}{4q^\frac{i+1}{2}-1+(q-1)(q^{i+1}-q^{\lceil i/2\rceil})}\\ 
	& = &  \frac{q^{i+\frac{5}{2}} -q^\frac{i+2}{2}(1+\sqrt{q})+1}{q^{i+2} +4q^\frac{i+1}{2}-q^{i+1}-(q-1)q^{\lceil i/2\rceil}-1}\\
	& \leq & \frac{q^{i+2}\left(\sqrt{q}-q^{-\frac{i+2}{2}}(1+\sqrt{q})+q^{-i-2}\right)}{q^{i+2}\left(1+4q^{-\frac{i+3}{2}} -q^{-1}-(q-1)q^{-\frac{i+3}{2}}-q^{-i-2}\right)} 
\end{eqnarray*}
\noindent which gives that:
$$\displaystyle{\frac{g_{i+1}}{X} \leq \frac{\sqrt{q}-q^{-\frac{i+2}{2}}(1+\sqrt{q})+q^{-i-2}}{1 -q^{-1}-(q-1)q^{-\frac{i+3}{2}}-q^{-i-2}}}$$
so  since $i\geq2$:
$$\displaystyle{\frac{g_{i+1}}{X} \leq \frac{\sqrt{q}-q^{-2}(1+\sqrt{q})+q^{-4}}{1 -q^{-1}-(q-1)q^{-\frac{5}{2}}-q^{-4}}}.$$
Finally, with $q=3$, one gets:
$$
\mus_3(n) \leq \underbrace{3\left(1+\frac{\sqrt{3}-\frac{1}{9}(1+\sqrt{3})+3^{-4}}{\frac{2}{3}-2\cdot 3^{-\frac{5}{2}}-3^{-4}}\right)}_{\simeq 7.7314}n \leq  \underbrace{\frac{1933}{250}}_{=7.732}n.\\
$$
\end{proof}

\paragraph{\textbf{Remark.}} In the case of $\F_2$, the descent of the tower $T_0$ defined over $\F_{q^2}$ with $q=2$ from $\F_{q^2}$ to $\Fq = \F_2$ is not sufficient to obtain a competitive bound for the tensor rank. Indeed, in this case, we get:
$$
\mus_2(n) \leq  22.5n + \frac{9}{2}.
$$

\begin{corollary}
The following new estimates hold:
$$
C_2 = 16.16 \qquad \mbox{and} \qquad C_3 = 7.732.
$$
\end{corollary}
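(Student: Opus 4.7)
The statement to prove is that $C_2 = 16.16$ and $C_3 = 7.732$ are admissible values of the constant $C_q$ introduced in Proposition \ref{theobornes}, i.e.\ that $\mus_2(n) \leq 16.16\, n$ and $\mus_3(n) \leq 7.732\, n$ for every integer $n \geq 1$.

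The plan is to deduce both statements almost directly from Theorem \ref{newunifbounds}. The case of $\F_3$ is immediate: the theorem already gives $\mus_3(n) \leq \frac{1933}{250} n = 7.732\, n$ for $n \geq 13$ (this is the explicit form in which the bound is stated in the proof of the theorem), and the case $n \leq 12$ is covered by the explicit values of $\mus_3(n)$ tabulated in \cite[Table 1]{ceoz} (together with the trivial $\mus_3(1) = 1$), each of which I would just check to lie below $7.732\, n$.

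For $\F_2$, the theorem yields $\mus_2(n) \leq \tfrac{1035}{68} n + \tfrac{9}{2}$, so the additive constant $\tfrac{9}{2}$ must be absorbed into the linear term. The reduction is routine: the inequality $\tfrac{1035}{68} n + \tfrac{9}{2} \leq 16.16\, n$ is equivalent to
\[
\frac{9}{2} \leq \left(16.16 - \frac{1035}{68}\right) n,
\]
and a short numerical check of this threshold will show that it holds as soon as $n$ exceeds a small explicit integer (one finds $n \geq 5$ suffices, since $16.16 - \tfrac{1035}{68} > 0.93$ and $\tfrac{9}{2} / 0.93 < 5$). For the finitely many remaining small values of $n$, I would invoke the known exact or upper values of $\mus_2(n)$ recalled in the remark following Proposition \ref{theobornes} (these are the values from \cite[Table 1]{ceoz}) and verify case by case that each is at most $16.16\, n$.

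The argument is essentially a bookkeeping exercise on top of Theorem \ref{newunifbounds}; there is no real obstacle, only the need to be careful that the finite list of small-$n$ values is actually compatible with the claimed uniform constant. The only subtle point is to make sure the threshold computation is correct and that the tabulated values of $\mus_2(n)$ and $\mus_3(n)$ for small $n$ indeed satisfy $\mus_2(n)/n \leq 16.16$ and $\mus_3(n)/n \leq 7.732$; once that is checked, the corollary follows directly.
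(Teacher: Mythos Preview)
Your proposal is correct and follows essentially the same approach as the paper: the $\F_3$ case is immediate from Theorem~\ref{newunifbounds}, and the $\F_2$ case combines the theorem's linear bound for large $n$ with the tabulated values from \cite[Table~1]{ceoz} for small $n$. The only difference is cosmetic: the paper partitions at $n\leq 18$ (matching the range where the proof of Theorem~\ref{newunifbounds} itself falls back on the table) rather than at your sharper threshold $n\leq 4$, but both choices yield the claimed constants.
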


\begin{proof}
The estimate for $C_3$ is straightforward since $\frac{1933}{250} = 7.732$; for $C_2$, it follows from Proposition \ref{newunifbounds} for $n$ greater than 19 and \cite[Table 1]{ceoz} for $n\leq18$. 
\end{proof}

\end{document}